\definecolor{darkblue}{rgb}{0.0,0.0,0.4}
\newtheorem{theorem}{Theorem}[section]
\theoremstyle{definition}
\newtheorem{lemma}[theorem]{Lemma}
\newtheorem{example}[theorem]{Example}
\theoremstyle{remark}
\newtheorem{remark}[theorem]{Remark}
\title[Chebyshev-Sylvester method]{An expository review of the Chebyshev-Sylvester method in prime number theory}
\author{Tsogtgerel Gantumur}
\address{$^1$McGill University}
\address{$^2$National University of Mongolia}
\address{$^3$Institute of Mathematics and Digital Technology, Mongolian Academy of Sciences}
\date{\today}
\begin{document}

\begin{abstract}
This paper provides a detailed expository and computational account of the elementary methods developed by P. L. Chebyshev and J. J. Sylvester to establish explicit bounds on the prime counting function. 
The core of the method involves replacing the M\"obius function with a finitely supported arithmetic function in the convolution identities, 
relating the Chebyshev function $\psi(x)$ to the summatory logarithm function $T(x) = \log(\lfloor x \rfloor!)$. 
%The quality of the bounds is determined by the properties of an associated summatory function $E(x) = \sum_{k \le x} \lfloor x/k \rfloor \nu(k)$. 
We present a comprehensive analysis of the various schemes proposed by Chebyshev and Sylvester, with a central focus on Sylvester's innovative iterative refinement procedure. By implementing this procedure computationally, we replicate, verify, and optimize the historical results, providing a self-contained pedagogical resource for this pivotal technique in analytic number theory.
\end{abstract}

\maketitle

\tableofcontents

\section{Introduction}

The distribution of prime numbers has been a central object of study in number theory for centuries. Before the proof of the Prime Number Theorem using complex analysis, mathematicians developed "elementary" methods to establish rigorous bounds on the prime-counting function, $\pi(x)$. In the 1850s, P. L. Chebyshev made the first significant breakthrough by proving that the ratio $\pi(x)/(x/\log x)$ is bounded between two positive constants \cite{cheb1848,cheb1850}.
His method involved relating the prime-counting function to the more analytically tractable logarithm of the factorial function.

Decades later, J. J. Sylvester expanded on this work by introducing a variety of new auxiliary functions and, most importantly, developing a powerful iterative procedure to systematically refine the bounds obtained from any given function \cite{sylv1881,sylv1892}.
This ``bootstrapping'' method represented the high water mark of elementary techniques prior to the proof of the Prime Number Theorem.

This paper provides a self-contained exposition of the Chebyshev-Sylvester method. 
We begin by introducing the necessary concepts from the theory of arithmetic functions. 
We then detail the main framework, analyze the schemes of Chebyshev and Sylvester, 
and explain the iterative refinement procedure. A unique feature of this work is the use of modern computational tools to implement and optimize Sylvester's procedure, thereby finding the best possible bounds attainable for each of the historical schemes.
All computations were carried out in Python using the Jupyter notebooks in the repository \cite{GantumurNumbers}.

\section{Arithmetic functions and prime counting}

To make the paper self-contained, we first introduce the essential concepts from the theory of arithmetic functions, which form the language of our exposition.

An \emph{arithmetic function} is a function $f: \mathbb{N} \to \mathbb{C}$ defined on the set of positive integers. A key operation between two arithmetic functions $f$ and $g$ is the \emph{Dirichlet convolution}, denoted by $f*g$, which is defined as:
\begin{equation}
(f*g)(n) = \sum_{d|n} f(d)g(n/d) = \sum_{ab=n} f(a)g(b).
\end{equation}
This operation is commutative and associative. 
The identity element for convolution is the function $\delta$, defined as $\delta(1)=1$ and $\delta(n)=0$ for $n>1$.

The fundamental theorem of arithmetic can be written in additive form as
\begin{equation}\label{e:lnn}
\ln n = \sum_{d|n} \Lambda(d) ,
\end{equation}
where $\Lambda(n)$ is the \emph{von Mangoldt function}
\begin{equation}
\Lambda(n) = \begin{cases} \ln p & \text{if } n=p^k \text{ for some prime } p \text{ and integer } k \ge 1, \\ 0 & \text{otherwise.} \end{cases}
\end{equation}
Introducing the \emph{unit function}, $\mathit1(n) = 1$ for all $n \in \mathbb{N}$,
\eqref{e:lnn} becomes $\ln=\mathit1*\Lambda$.
Thus if we can invert $\mathit1$ under convolution, we can express $\Lambda$ in terms of $\ln$.
The \emph{M\"obius function}, defined as
\begin{equation}
\mu(n) = 
\begin{cases} 
1 & \text{if } n=1, \\ (-1)^k & \text{if } n \text{ is a product of } k \text{ distinct primes}, \\ 0 & \text{if } n \text{ has a squared prime factor}, 
\end{cases}
\end{equation}
does exactly what we need:
\begin{equation}
(\mu * \mathit1)(n) = \sum_{d|n} \mu(d) = \delta(n).
\end{equation}
By convolving $\ln=\mathit1*\Lambda$ with $\mu$, we get
\begin{equation}
\mu * \ln = \mu * (\mathit1*\Lambda) = (\mu*\mathit1)*\Lambda = \delta*\Lambda = \Lambda .
\end{equation}
Let us display the important identities for later use.

\begin{lemma}
We have $\ln=\mathit1*\Lambda$ and $\Lambda=\mu*\ln$.
\end{lemma}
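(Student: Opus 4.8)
The plan is to establish the two identities in sequence: the first is a direct reformulation of the fundamental theorem of arithmetic, and the second then follows by a purely formal manipulation once the Möbius inversion fact $\mu * \mathit1 = \delta$ is available.

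First I would prove $\ln = \mathit1 * \Lambda$, that is, \eqref{e:lnn}. Writing $n = p_1^{a_1}\cdots p_r^{a_r}$ with the $p_i$ distinct primes, the divisors $d \mid n$ on which $\Lambda$ does not vanish are exactly the prime powers $p_i^k$ with $1 \le k \le a_i$, each contributing $\Lambda(p_i^k) = \ln p_i$. Summing over them gives $\sum_{d \mid n}\Lambda(d) = \sum_{i=1}^{r} a_i \ln p_i = \ln n$, and the case $n = 1$ is the empty sum, equal to $\ln 1 = 0$. Equivalently, $(\mathit1 * \Lambda)(n) = \sum_{d \mid n} \mathit1(n/d)\Lambda(d) = \sum_{d \mid n}\Lambda(d)$.

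Next I would verify $\mu * \mathit1 = \delta$, i.e.\ $\sum_{d \mid n}\mu(d) = \delta(n)$. For $n = 1$ both sides equal $1$; for $n > 1$, only the squarefree divisors contribute to the sum, and if $n$ has $r \ge 1$ distinct prime factors these divisors are in bijection with the subsets of the set of those $r$ primes, so $\sum_{d \mid n}\mu(d) = \sum_{k=0}^{r}\binom{r}{k}(-1)^k = (1-1)^r = 0$. This is the one step carrying genuine (if elementary) combinatorial content, and I expect it to be the main obstacle, such as it is.

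Finally, using that Dirichlet convolution is commutative and associative with identity element $\delta$, I would convolve the first identity with $\mu$ to obtain
\[
\mu * \ln = \mu * (\mathit1 * \Lambda) = (\mu * \mathit1) * \Lambda = \delta * \Lambda = \Lambda ,
\]
which is the second identity. Together these two statements are exactly the assertion of the lemma, completing the proof.
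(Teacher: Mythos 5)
Your proposal is correct and follows essentially the same route as the paper, which derives $\ln=\mathit1*\Lambda$ from the fundamental theorem of arithmetic, cites $\mu*\mathit1=\delta$, and then obtains $\Lambda=\mu*\ln$ by the identical associativity computation $\mu*(\mathit1*\Lambda)=(\mu*\mathit1)*\Lambda=\delta*\Lambda$. The only difference is that you spell out the elementary verifications of $\sum_{d\mid n}\Lambda(d)=\ln n$ and $\sum_{d\mid n}\mu(d)=\delta(n)$, which the paper simply asserts in the surrounding discussion.
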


%\subsection{The Chebyshev and Summatory Logarithm Functions}
The central object of study in prime number theory is the {\em prime counting function}
\begin{equation}
\pi(x) = \sum_{p \le x} 1 ,
\end{equation}
where the notation $p\le x$ means that the sum is over all primes not exceeding $x$.
The {\em prime number theorem} (PNT), which was a conjecture at the time of Chebyshev-Sylvester work, posits that
\begin{equation}
\pi(x) \cdot \frac{\ln x}x \to 1\qquad\text{as}\quad x\to\infty .
\end{equation}
In the Chebyshev-Sylvester framework (and in fact in all standard approaches to PNT), 
we access $\pi(x)$ through the following more analytically friendly function:
\begin{equation}
\psi(x) = \sum_{p^m \le x} \ln p = \sum_{n \le x} \Lambda(n) ,
\end{equation}
where the notation $p^m\le x$ means that the sum is over all primes $p$ and all positive integers $m$ satisfying $p^m\le x$.
This was the second of two functions introduced by Chebyshev.
An interesting side observation is
\begin{equation}
e^{\psi(x)}=\prod_{p^m\leq x}p=\mathrm{lcd}\{n:n\leq x\} .
\end{equation}

\begin{lemma}
For all $0<\alpha<1$ and $ x>1$, we have
\begin{equation}
\psi(x)\leq\pi(x)\ln x\leq\frac{\psi(x)}\alpha+x^\alpha\ln x ,
\end{equation}
and hence the prime number theorem is equivalent to the statetent $\psi(x) \sim x$.
\end{lemma}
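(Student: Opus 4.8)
The plan is to establish the two-sided inequality by direct manipulation of the defining sums — splitting the primes at the threshold $x^\alpha$ for the upper bound — and then to deduce the PNT equivalence by dividing through by $x$ and letting $\alpha \to 1^-$.

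For the lower bound $\psi(x) \le \pi(x)\ln x$, I would regroup $\psi(x) = \sum_{n\le x}\Lambda(n)$ according to the prime underlying each prime power: the total contribution of a prime $p \le x$ is $\lfloor \log_p x\rfloor \ln p \le \log_p x \cdot \ln p = \ln x$. Summing this bound over the $\pi(x)$ primes not exceeding $x$ gives the claim.

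For the upper bound, the key step is to write $\pi(x) = \pi(x^\alpha) + \#\{p : x^\alpha < p \le x\}$. The first term is at most $x^\alpha$. For each prime $p$ with $x^\alpha < p \le x$ we have $\ln p > \alpha\ln x > 0$, hence $1 < \ln p/(\alpha\ln x)$; summing this and enlarging the range to all $p \le x$ bounds the second term by $\frac{1}{\alpha\ln x}\sum_{p\le x}\ln p \le \frac{\psi(x)}{\alpha\ln x}$, using $\sum_{p\le x}\ln p \le \psi(x)$. Adding the two pieces and multiplying by $\ln x > 0$ (legitimate since $x>1$) yields $\pi(x)\ln x \le x^\alpha\ln x + \psi(x)/\alpha$.

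Finally, dividing the chain $\psi(x) \le \pi(x)\ln x \le \psi(x)/\alpha + x^\alpha\ln x$ by $x$ gives
\[
\frac{\psi(x)}{x} \;\le\; \frac{\pi(x)\ln x}{x} \;\le\; \frac{1}{\alpha}\cdot\frac{\psi(x)}{x} + x^{\alpha-1}\ln x ,
\]
and since $\alpha < 1$ the last term tends to $0$. Thus $\psi(x)\sim x$ forces $1 \le \liminf \pi(x)\ln x/x \le \limsup \pi(x)\ln x/x \le 1/\alpha$, and letting $\alpha\to1^-$ gives PNT; conversely, PNT together with the same inequalities gives $\limsup\psi(x)/x \le 1$ and $\liminf\psi(x)/x \ge \alpha$ for every $\alpha<1$, whence $\psi(x)\sim x$. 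I do not expect a genuine obstacle here; the only points requiring care are the treatment of the floor function in the first inequality, keeping track of the sign of $\ln x$, and respecting the order of the two limiting operations ($x\to\infty$ before $\alpha\to1^-$) in the equivalence.
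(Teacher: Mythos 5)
Your proof is correct and follows essentially the same route as the paper: the lower bound by grouping $\psi(x)$ over primes with the floor $\lfloor \ln x/\ln p\rfloor \le \ln x/\ln p$, and the upper bound by splitting at the threshold $x^\alpha$ and using $\ln p > \alpha\ln x$ there (the paper phrases this as a lower bound on $\psi(x)$ rather than an upper bound on $\pi(x)$, but the inequality is the same). Your explicit $\liminf/\limsup$ argument for the equivalence, with $x\to\infty$ before $\alpha\to1^-$, fills in a step the paper leaves to the reader.
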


\begin{proof}
For any $p$, the highest power with $p^m\leq x$ is $m=\big\lfloor\frac{\ln x}{\ln p}\big\rfloor$, and so
\begin{equation}
\psi(x)=\sum_{p^m\leq x}\ln p=\sum_{p\leq x}\Big\lfloor\frac{\ln x}{\ln p}\Big\rfloor\ln p\leq\sum_{p\leq x}\frac{\ln x}{\ln p}\ln p=\ln x\sum_{p\leq x}1=\pi(x)\ln x.
\end{equation}
On the other hand, we have
\begin{equation}
\begin{split}
\psi(x)
&\geq\displaystyle\sum_{x^\alpha<p\leq x}\ln p\geq\sum_{x^\alpha<p\leq x}\ln x^\alpha=\alpha\ln x\sum_{x^\alpha<p\leq x}1\\
&=\displaystyle\alpha\ln x\cdot\big(\pi(x)-\pi(x^\alpha)\big)\geq\alpha\ln x\cdot\big(\pi(x)-x^\alpha\big).
\end{split}
\end{equation}
The proof is complete.
\end{proof}

Chebyshev's idea was to relate $\psi(x)=\sum_{n\le x}\Lambda(n)$ to the more approachable function
\begin{equation}
T(x) = \sum_{n \le x} \ln n = \ln(\lfloor x \rfloor!) = x \ln x - x + O(\ln x) ,
\end{equation}
through $\ln=\mathit1*\Lambda$.
To this end, we need the following result.

\begin{lemma}\label{l:summa}
We have
\begin{equation}\label{e:summa}
\sum_{n\leq x}(f*g)(n) = \sum_{k\le x}f(k)\sum_{d\leq x/k}g(d) .
\end{equation}
\end{lemma}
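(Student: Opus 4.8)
The plan is to unwind the definition of the Dirichlet convolution on the left-hand side and then reorganize the resulting double sum by changing the order of summation. Starting from $(f*g)(n)=\sum_{ab=n}f(a)g(b)$, the left side of \eqref{e:summa} is the sum of $f(a)g(b)$ taken over all triples $(n,a,b)$ of positive integers with $ab=n$ and $n\le x$. Since $n$ is completely determined by the pair $(a,b)$ via $n=ab$, the first step is to note that these triples are in bijective correspondence with pairs $(a,b)$ of positive integers satisfying $ab\le x$, so that
\[
\sum_{n\le x}(f*g)(n)=\sum_{\substack{a,b\ge1\\ ab\le x}}f(a)g(b).
\]

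Next I would carry out this two-dimensional sum as an iterated sum, summing first over $b$ for each fixed value $k=a$. For $k>x$ there is no $b\ge1$ with $kb\le x$, so only the terms with $k\le x$ contribute; and for each such $k$ the admissible values of $b$ are exactly the positive integers with $b\le x/k$. Collecting terms this way yields $\sum_{k\le x}f(k)\sum_{b\le x/k}g(b)$, which is the right-hand side. Because $\{(a,b):ab\le x\}$ is a finite set, every sum in sight is finite and the interchange of summation order needs no justification beyond the associativity and commutativity of finite addition.

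The only spot that calls for a moment's care — and it is the closest thing to an obstacle in an otherwise routine argument — is the bijection invoked in the first step. One should observe that for a fixed $n$ the pairs $(a,b)$ with $ab=n$ are precisely the pairs $(d,n/d)$ with $d\mid n$, and that conversely every pair $(a,b)$ with $ab\le x$ produces exactly one value $n=ab\le x$. Making this identification explicit is what legitimizes the passage from a sum indexed by $n$ to a sum indexed by the underlying factorizations, after which the remainder is bookkeeping.
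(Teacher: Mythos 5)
Your proof is correct and follows essentially the same route as the paper: both unwind the convolution into a double sum over pairs $(a,b)$ with $ab\le x$, sum over the second variable for each fixed first variable, and observe that terms with $k>x$ contribute an empty inner sum. The extra remarks on the bijection and finiteness are fine but not needed beyond what the paper already records.
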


\begin{proof}
We proceed as
\begin{equation}
\sum_{n\leq x}(f*g)(n)=\sum_{n\leq x}\sum_{d|n}f(n/d)g(d)=\sum_{k\geq1}\sum_{kd\leq x}f(k)g(d) =\sum_{k\geq1}f(k)\sum_{d\leq x/k}g(d) ,
\end{equation}
and note that the outer sum is restricted by $k\le x$, since for $k>x$ the inner sum is empty.
\end{proof}

Applying this to  $\ln=\mathit1*\Lambda$, we get
\begin{equation}\label{e:summa-log-psi}
T(x)=\sum_{k\le x}\psi(x/k) ,
\end{equation}
and to  $\Lambda=\mu*\ln$, we get
\begin{equation}\label{e:psi-summa-log}
\psi(x)=\sum_{k\le x}\mu(k)T(x/k) .
\end{equation}
This is the starting point of Chebyshev's method.

\section{Chebyshev's basic framework}

In principle, \eqref{e:psi-summa-log} expresses $\psi(x)$ in terms of $T(x)$,
but it involves the M\"obius function, which is quite unwieldy.
So suppose that we have a surrogate $\nu$ of $\mu$,
and convolve it with the identity $\mathit1*\Lambda=\ln$, to get
\begin{equation}
\nu*\mathit1*\Lambda=\nu*\ln .
\end{equation}
Then summation of the right hand side via Lemma~\ref{l:summa} yields
\begin{equation}\label{e:sylvester-1}
V(x)=\sum_{n\leq x}(\nu*\mathit1*\Lambda)(n)=\sum_{n\leq x}(\nu*\ln)(n)=\sum_{k\leq x}\nu(k)T(x/k) .
\end{equation}
Note that the first expression is the definition of $V(x)$, which we expect to be an imitation of $\psi$, if $\nu$ imitates $\mu$ well.
The sense in which $\nu$ should ``imitate'' $\mu$ is part of what need to figure out as we develop the theory.

Now let us apply our summation formula \eqref{e:summa} to the splitting $(\nu*\mathit1)*\Lambda$, to get
\begin{equation}\label{e:sylvester-2}
V(x)=\sum_{n\leq x}(\nu*\mathit1*\Lambda)(n)=\sum_{k\leq x}E(x/k)\Lambda(k) ,
\end{equation}
where we have introduced 
\begin{equation}
E(x)=\sum_{n\leq x}(\nu*\mathit1)(n)=\sum_{k\leq x}\Big\lfloor\frac{x}{k}\Big\rfloor\nu(k) .
\end{equation}
Chebyshev's original choice for $V$ is
\begin{equation}
V(x)=T(x)-T(x/2)-T(x/3)-T(x/5)+T(x/30) ,
\end{equation}
which corresponds to $\nu$ that is given by
\begin{equation}
\nu_*=\delta_1-\delta_2-\delta_3-\delta_5+\delta_{30} .
\end{equation}
Here $\delta_m(n)=\delta_{m,n}$, that is, $\delta_m(n)=1$ for $n=m$, and $\delta_m(n)=0$ otherwise.

What conditions does $\nu$ need to satisfy in order to give us a good estimate on $\psi(x)$ through the relations \eqref{e:sylvester-1}-\eqref{e:sylvester-2}? 
The following conditions suggest themselves naturally.
\begin{itemize}
\item
Supposing that $V(x)\approx\psi(x)$ in some sense,
since $T(x)=x\ln x+O(x)$, 
the right hand side of \eqref{e:sylvester-1} can be $O(x)$ only if $\sum_n\nu(n)/n=0$.
Chebyshev's scheme satisfies this condition.
\item
In order for $V(x)\approx\psi(x)$ to hold, 
in view of \eqref{e:sylvester-2},
it is necessary that $E(x)\approx1$ at least for small values of $x$.
Chebyshev's scheme satisfies $E(x)=1$ for $1\leq x<6$.
\item
Finally, for practicality, we require that $\nu(n)\neq0$ for only finitely many $n$.
\end{itemize}

\begin{theorem}\label{t:cheb}
(a) If the cancellation condition 
\begin{equation}\label{e:sylvester-3}
\sum_n\frac{\nu(n)}n=0 ,
\end{equation}
is satisfied, then we have
\begin{equation}
V(x)=A(\nu)x+O(\ln x) ,
\end{equation}
where 
\begin{equation}
A(\nu) = -\sum_{n}\frac{\nu(n)\ln n}{n} .
\end{equation}
(b) If $E(x)\leq1$ for $x\geq1$,
then $V(x)\le\psi(x)$.
\\(c) If $E(x)\geq1$ for $1\leq x<N$, then $V(x)\ge\psi(x)-\psi(x/N)$.
In addition, if we assume \eqref{e:sylvester-3}, then
\begin{equation}
\psi(x)\le\frac{N}{N-1}A(\nu)x+O(\ln^2\!x) .
\end{equation}
\end{theorem}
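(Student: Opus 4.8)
The plan is to prove the three parts in turn, each time starting from one of the two expressions for $V(x)$ recorded in \eqref{e:sylvester-1}--\eqref{e:sylvester-2}. For part (a) I would use $V(x)=\sum_{k\le x}\nu(k)T(x/k)$. Since $\nu$ is supported on a finite set $S$, once $x\ge\max S$ the constraint $k\le x$ is vacuous and every argument $x/k$ exceeds a fixed positive multiple of $x$, so the expansion $T(x/k)=\frac{x}{k}\ln x-\frac{x}{k}\ln k-\frac{x}{k}+O(\ln x)$, obtained from $T(y)=y\ln y-y+O(\ln y)$, holds with an error uniform over $k\in S$. Substituting and grouping terms produces the coefficient $\sum_k\nu(k)/k$ in front of $x\ln x$, the coefficient $-\sum_k\nu(k)\ln k/k=A(\nu)$ in front of one $x$, and the coefficient $-\sum_k\nu(k)/k$ in front of a second $x$; the cancellation hypothesis \eqref{e:sylvester-3} annihilates the first and third, leaving $V(x)=A(\nu)x+O(\ln x)$. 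On the bounded range $1\le x<\max S$ both sides are $O(1)$, so the estimate holds for all (say) $x\ge2$. This part is routine bookkeeping.

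For part (b) I would switch to $V(x)=\sum_{k\le x}E(x/k)\Lambda(k)$: since $k\le x$ forces $x/k\ge1$, the hypothesis gives $E(x/k)\le1$ in every term, and because $\Lambda(k)\ge0$ the bound $V(x)\le\sum_{k\le x}\Lambda(k)=\psi(x)$ follows termwise. For the first claim of part (c), starting again from $V(x)=\sum_{k\le x}E(x/k)\Lambda(k)$, I would split the sum at $k=x/N$: for $x/N<k\le x$ one has $1\le x/k<N$, so $E(x/k)\ge1$ and this part of the sum is at least $\sum_{x/N<k\le x}\Lambda(k)=\psi(x)-\psi(x/N)$, while the remaining terms with $k\le x/N$ are discarded using that $E\ge0$ throughout $[1,\infty)$. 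This nonnegativity of $E$ --- which for the schemes under consideration holds in the sharp form that $E$ takes only the values $0$ and $1$ --- is the one ingredient not literally displayed among the hypotheses, and it is the step that most warrants care; granting it, we get $V(x)\ge\psi(x)-\psi(x/N)$.

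The second claim of part (c) then follows by bootstrapping. Combining $V(x)\ge\psi(x)-\psi(x/N)$ with part (a) gives $\psi(x)\le\psi(x/N)+A(\nu)x+O(\ln x)$, valid for $x$ beyond a fixed threshold $x_0$; applying it repeatedly to $x,\,x/N,\,x/N^2,\dots$ until the argument drops below $x_0$ --- which takes $m=O(\ln x)$ steps, after which $\psi(x/N^m)\le\psi(x_0)=O(1)$ --- yields
\[
\psi(x)\le A(\nu)x\sum_{j\ge0}N^{-j}+m\cdot O(\ln x)+O(1)=\frac{N}{N-1}A(\nu)x+O(\ln^2 x),
\]
where the geometric series has been extended to $\infty$ at a cost of $O(1)$ and the $O(\ln x)$ errors from the $O(\ln x)$ iterations combine into the $O(\ln^2 x)$ term. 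The genuine difficulties are thus localized rather than global: the uniformity of the error terms in part (a) over $S$ and over the bounded range of small $x$; the nonnegativity of $E$ needed to drop the tail in part (c); and controlling the error accumulated over the $O(\ln x)$ iteration steps so that it is honestly $O(\ln^2 x)$ with the advertised constant $N/(N-1)$.
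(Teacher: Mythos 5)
Your argument is correct and follows essentially the same route as the paper: Stirling's formula plus the cancellation condition for (a), the termwise comparison $E(x/k)\le 1$ for (b), and for (c) splitting the sum at $k=x/N$ followed by telescoping over the $O(\ln x)$ scales $x/N^j$ and summing the geometric series. The one point where you go beyond the paper is your explicit flag that discarding the tail $k\le x/N$ requires $E\ge 0$ on $[N,\infty)$, a hypothesis absent from the statement; the paper's proof makes the same assumption silently, and it does hold for every scheme ($\nu_*$, $\nu_1$--$\nu_6$) to which this crude lower bound is actually applied, but it fails for $\nu_7$ and $\nu_8$, where $E$ takes negative values and the paper accordingly resorts to more elaborate lower bounds for $V$.
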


\begin{proof}
(a) Using Stirling's approximation $T(x)=x\ln x-x+O(\ln x)$ in \eqref{e:sylvester-1},
and taking into account the cancellation condition, we derive
\begin{equation}
V(x)=\sum_{n\leq x}\nu(n)T(x/n)=-\Big(\sum_{n}\frac{\nu(n)\ln n}{n}\Big) x+O(\ln x)=A(\nu)x+O(\ln x) .
\end{equation}
(b) It is immediate from \eqref{e:sylvester-2} that
\begin{equation}
V(x)=\sum_{k\leq x}E(x/k)\Lambda(k)\leq\sum_{k\leq x}\Lambda(k)=\psi(x) .
\end{equation}
(c) First, we have
\begin{equation}
V(x)=\sum_{k\leq x}E(x/k)\Lambda(k)\geq\sum_{x/N<k\leq x}\Lambda(k)=\psi(x)-\psi(x/N) .
\end{equation}
Then noting that
\begin{equation}
\begin{split}
\psi(x)-\psi(x/N)&\leq V(x)=A(\nu)x+O(\ln x),\\
\psi(x/N)-\psi(x/N^2)&\leq V(x/N)=A(\nu)x/N+O(\ln x),\\
\psi(x/N^2)-\psi(x/N^3)&\leq V(x/N^2)=A(\nu)x/N^2+O(\ln x),\\
&\ldots
\end{split}
\end{equation}
and summing over these $\lfloor\frac{\ln x}{\ln N}\rfloor$ lines gives us the upper bound 
\begin{equation}
\psi(x)\leq(1+N^{-1}+N^{-2}+\ldots)A(\nu)x+O(\ln^2\!x)=\frac{N}{N-1}A(\nu)x+O(\ln^2\!x) .
\end{equation}
\end{proof}

\begin{example}
For Chebyshev's scheme, we have
\begin{equation}
A_*=A(\nu_*)=\frac{\ln2}2+\frac{\ln3}3+\frac{\ln5}5-\frac{\ln30}{30}=0.92129\ldots .
\end{equation}
In particular, $E(x)$ has period 30, and $0\leq E(x)\leq1$ for all $x$.
Moreover, since $E(x)=1$ for $x<6$, we have $N=6$. 
Applying the preceding theorem then yields the familiar bounds
\begin{equation}
A_*x+O(\ln x)\leq\psi(x)\leq\frac65A_*x+O(\ln^2\!x) .
\end{equation}
Figure \ref{f:echeb} shows $E(x)$ corresponding to Chebyshev's scheme.
\end{example}

\begin{figure}[ht]
\centering
\includegraphics[width=.9\textwidth]{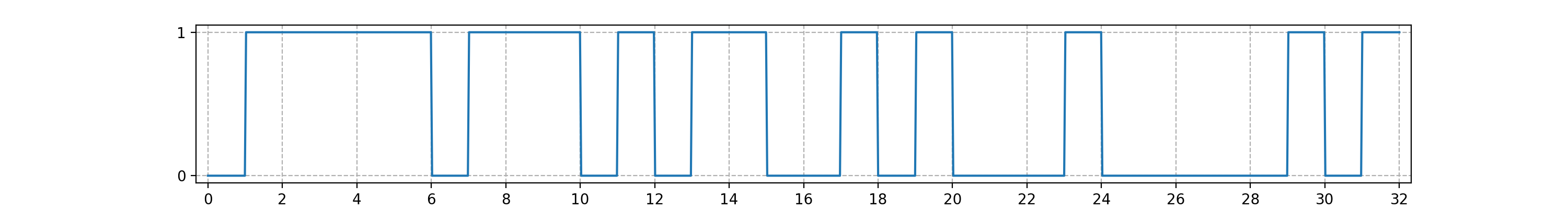}
\caption{$E(x)$ for Chebyshev's scheme.}
\label{f:echeb}
\end{figure}

\begin{remark}\label{r:graph-1}
Note that as \eqref{e:sylvester-2} means
\begin{equation}
V(x)=\sum_{n\leq x}E(x/n)\Lambda(n)=\sum_{n\leq x}\big(E(n)-E(n-1)\big)\psi(x/n) ,
\end{equation}
we can read off from the graph of $E(x)$ the coefficients of the expansion of $V(x)$ in terms of $\psi(x/n)$. 
For example, in Chebyshev's case we have
\begin{equation}
V(x)=\psi(x)-\psi(x/6)+\psi(x/7)-\psi(x/10)+\psi(x/11)-\psi(x/12)+\ldots,
\end{equation}
see Figure \ref{f:echeb1}.
\end{remark}

\begin{figure}[ht]
\centering
\includegraphics[width=.9\textwidth]{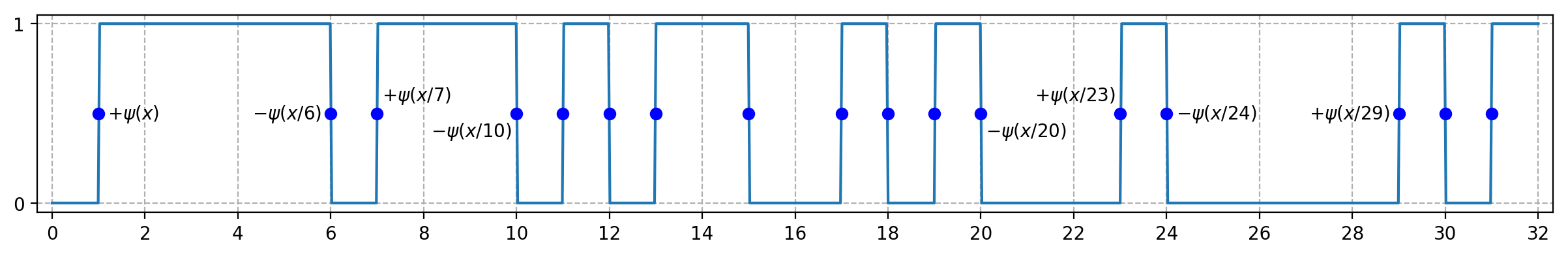}
\caption{Expanding $V(x)$ in terms of $\psi(x/n)$.}
\label{f:echeb1}
\end{figure}

\begin{remark}\label{r:graph-2}
On Figure \ref{f:echeb1}, in light of the nondecreasing property of $\psi(x)$, it is easy to see 
\begin{equation}\label{e:cheb-est-1}
\begin{split}
V(x)&=\psi(x)\underbrace{-\psi(x/6)+\psi(x/7)}_{\leq0}\underbrace{-\psi(x/10)+\psi(x/11)}_{\leq0}\underbrace{-\psi(x/12)+\ldots}_{\leq0}
\leq\psi(x) ,\\
V(x)&=\psi(x)-\psi(x/6)+\underbrace{\psi(x/7)-\psi(x/10)}_{\geq0}+\underbrace{\psi(x/11)-\psi(x/12)}_{\geq0}+\ldots \\
&\geq\psi(x)-\psi(x/6) .
\end{split}
\end{equation}
We illustrate this process in Figure \ref{f:echebe},
where the dashed red lines show the pairs that were canceled due to the nondecreasing property of $\psi(x)$.
\end{remark}

\begin{figure}[ht]
\centering
\includegraphics[width=.9\textwidth]{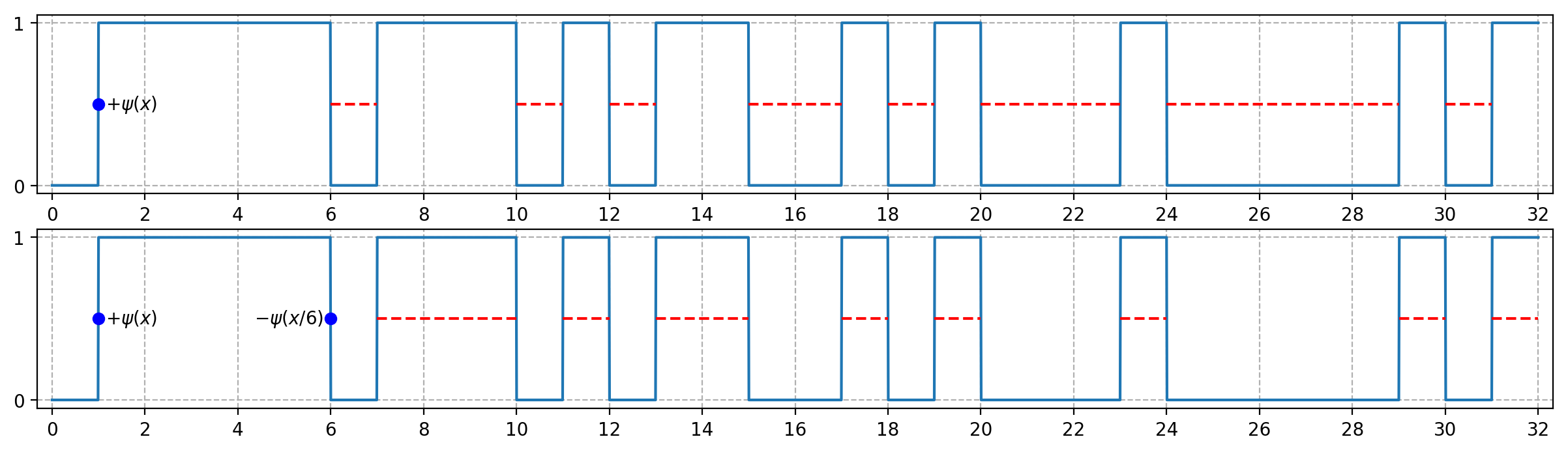}
\caption{Derivation of the estimates \eqref{e:cheb-est-1}.}
\label{f:echebe}
\end{figure}

\section{Sylvester's new schemes}

It had passed 30--40 years since Chebyshev's work,
when the following schemes were found by James Sylvester and his assistant James Hammond\footnote{Sylvester gives the credit of discovering $\nu_3$ to James Hammond.}:
\begin{equation}
\begin{split}
\nu_1&=\delta_1-2\delta_2,\\
\nu_2&=\delta_1-\delta_2-2\delta_3+\delta_6,\\
\nu_3&=\delta_1-\delta_2-\delta_3-\delta_4+\delta_{12} ,
\end{split}
\end{equation}
which are written in Sylvester's own notation as
\begin{equation}
\nu_1=[1;2,2],\qquad\nu_2=[1,6;2,3,3],\qquad\nu_3=[1,12;2,3,4] .
\end{equation}
In this notation, Chebyshev's scheme is
\begin{equation}
\nu_*=[1,30;2,3,5] .
\end{equation}
These new schemes satisfy $\eqref{e:sylvester-3}$, and $0\leq E(x)\leq1$ for all $x$. 
Moreover, we have the following.
\begin{itemize}
\item
For $\nu_1$, the period of $E(x)$ is 2, and $E(x)=1$ for $x<2$.
\item
For $\nu_2$, the period of $E(x)$ is 6, and $E(x)=1$ for $x<3$.
\item
For $\nu_3$, the period of $E(x)$ is 12, and $E(x)=1$ for $x<4$.
\end{itemize}
The corresponding functions $E(x)$ are shown in Figure \ref{f:e123}.

\begin{figure}[ht]
\centering
\includegraphics[width=.8\textwidth]{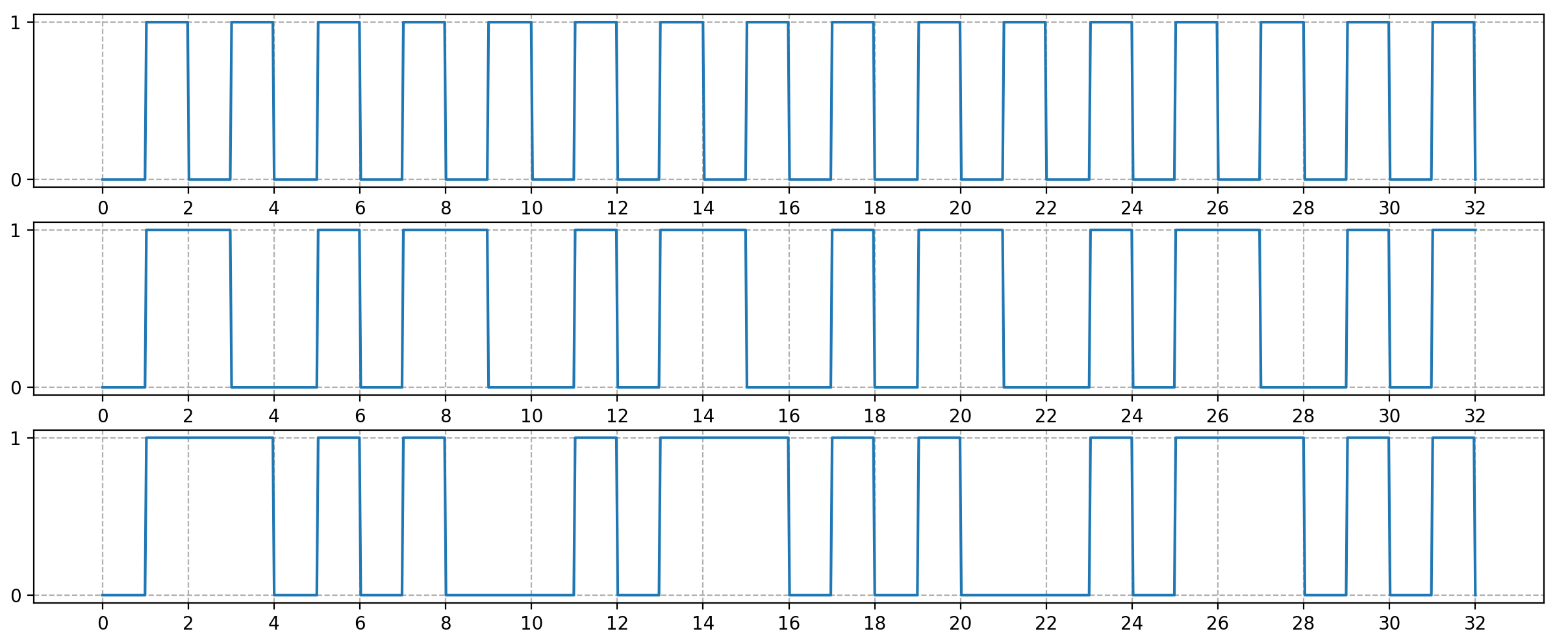}
\caption{The $E$-functions for $\nu_1$, $\nu_2$, and $\nu_3$.}
\label{f:e123}
\end{figure}

The constants $A(\nu)$ and $B(\nu)=\frac{N}{N-1}A(\nu)$
from Theorem~\ref{t:cheb}, for each new scheme, are as follows.
\begin{equation}
\begin{aligned}
A(\nu_1)&=0.6931\ldots,&\qquad B(\nu_1)&=2A(\nu_1)=1.3862\ldots,\\
A(\nu_2)&=0.7803\ldots,&\qquad B(\nu_2)&=\textstyle\frac32A(\nu_2)=1.1705\ldots,\\
A(\nu_3)&=0.8522\ldots,&\qquad B(\nu_3)&=\textstyle\frac43A(\nu_3)=1.1363\ldots
\end{aligned}
\end{equation}
Although all of these pairs are inferior to Chebyshev's
\begin{equation}
A(\nu_*)=0.9212\ldots,\qquad B(\nu_*)=\textstyle\frac65A(\nu_*)=1.1055\ldots ,
\end{equation}
each scheme is able to give $\psi(x)=O(x)$ and $\frac1{\psi(x)}=O(x)$ without much effort,
and so might be of interest in some situations.
For instance, already the scheme $\nu_2$ is strong enough to imply Bertrand's postulate.

In fact, there is no known scheme satisfying $0\leq E(x)\leq1$ except the aforementioned 4.
In order to improve upon Chebyshev's constants, Sylvester considered more general schemes,
including 
\begin{equation}
\begin{split}
\nu_4&=\delta_1-\delta_2-\delta_3-\delta_6,\\
\nu_5&=\delta_1+\delta_{15}-\delta_2-\delta_3-\delta_5-\delta_{30},\\
\nu_6&=\delta_1+\delta_6+\delta_{70}-\delta_2-\delta_3-\delta_5-\delta_7-\delta_{210} ,
\end{split}
\end{equation}
or written in Sylvester's notation
\begin{equation}
\nu_4=[1,6;2,3],\qquad\nu_5=[1,15;2,3,5,30],\qquad\nu_6=[1,6,70;2,3,5,7,210] .
\end{equation}
They all satisfy \eqref{e:sylvester-3}, and $0\leq E(x)\leq2$ for all $x$.
Moreover, we have the following.
\begin{itemize}
\item
For $\nu_4$, the period of $E(x)$ is 6, $E(x)=1$ for $x<5$, and $N=6$.
\item
For $\nu_5$, the period of $E(x)$ is 30, $E(x)=1$ for $x<6$, and $N=6$.
\item
For $\nu_6$, the period of $E(x)$ is 210, $E(x)=1$ for $x<10$, and $N=10$.
\end{itemize}
The corresponding functions $E(x)$ are shown in Figure \ref{f:e456}.
For $\nu_6$, the period of $E(x)$ is 210, and hence Figure \ref{f:e456} does not contain a whole period.
This is remedied in Figure \ref{f:e5}.

\begin{figure}[ht]
\centering
\includegraphics[width=.7\textwidth]{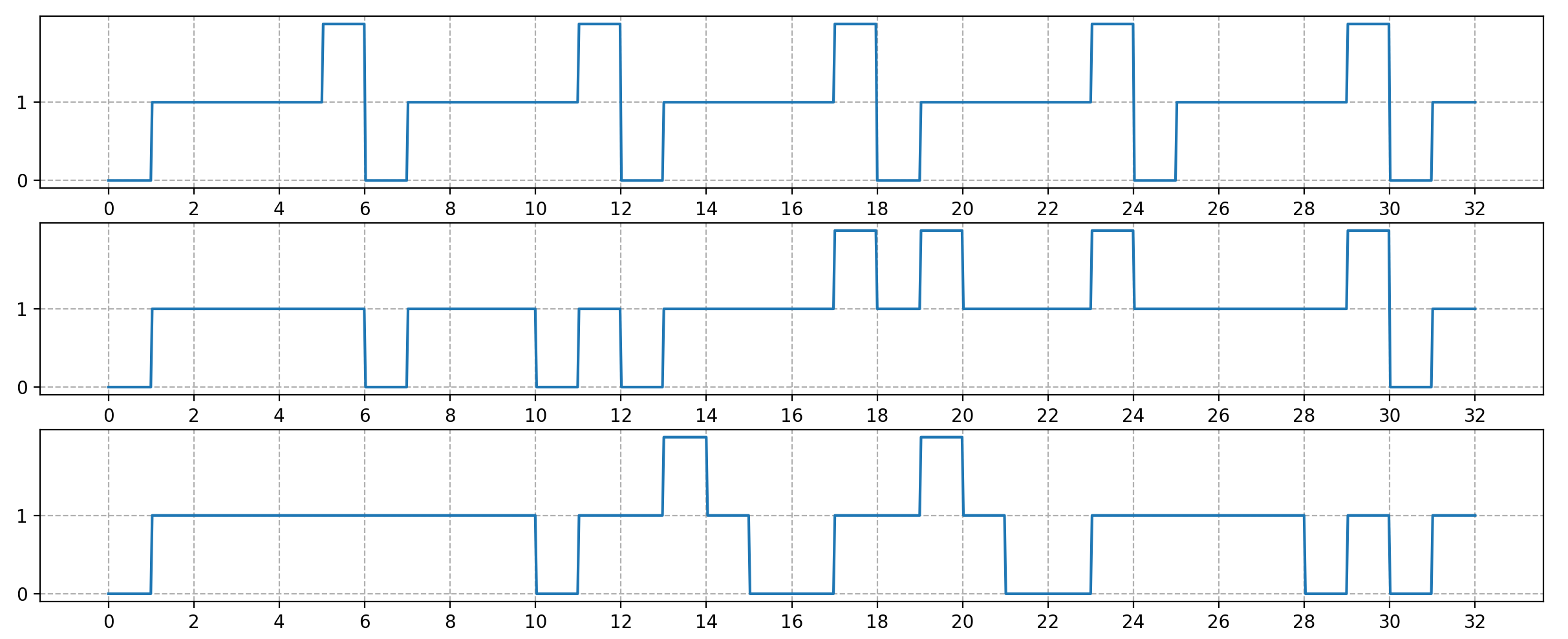}
\caption{The $E$-functions for $\nu_4$, $\nu_5$, and $\nu_6$.}
\label{f:e456}
\end{figure}

\begin{figure}[ht]
\centering
\includegraphics[width=.9\textwidth]{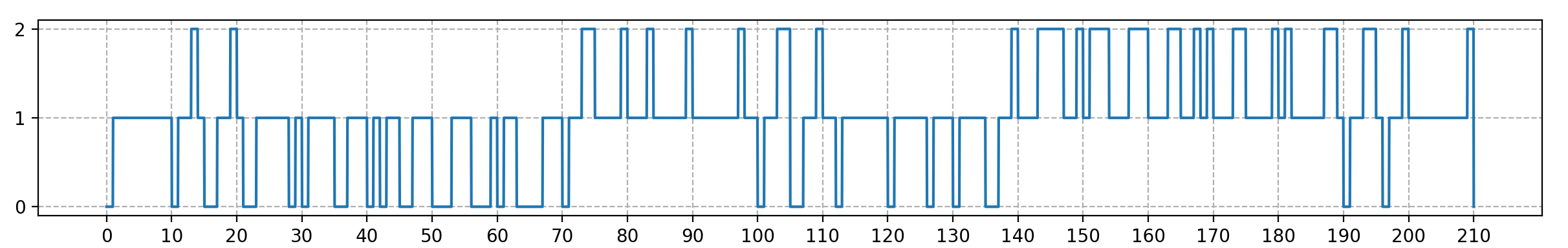}
\caption{A complete period of the $E$-function for $\nu_6$.}
\label{f:e5}
\end{figure}

The most striking feature of these schemes is that the condition $E\leq1$ is now violated, and hence Theorem~\ref{t:cheb}(b) cannot be applied.
However, (a) and (c) of that theorem are still valid, meaning that we have
\begin{equation}
V(x)=A(\nu)x+O(\ln x) ,
\end{equation}
and
\begin{equation}\label{e:sylvester-5}
\psi(x)\leq\frac{N}{N-1}A(\nu)x+O(\ln^2\!x) .
\end{equation}
The constants $A(\nu)$ corresponding to these schemes are
\begin{equation}
A(\nu_4)=1.0114\ldots,\quad A(\nu_5)=0.9675\ldots,\quad A(\nu_6)=0.9787\ldots.
\end{equation}
For a lower bound on $\psi$, we need to update Theorem~\ref{t:cheb}(b).

\begin{theorem}\label{t:sylv}
Assume the cancellation condition $\sum_n\nu(n)/n=0$, and assume $E(x)\ge1$ for $1\le x<N$.
In addition, let $E\le2$ and let $E(x)\le1$ for $1\le x<M$.
Then we have 
\begin{equation}
\psi(x)\ge\Big(1-\frac{N}{M(N-1)}\Big)A(\nu)x+O(\ln^2\!x) .
\end{equation}
\end{theorem}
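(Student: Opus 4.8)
The plan is to mirror the structure of Theorem~\ref{t:cheb}(c), but now bounding $V(x)$ \emph{from above} in terms of $\psi$, and then feeding back the upper bound on $\psi$ that part (c) already supplies. The starting point is the splitting \eqref{e:sylvester-2},
\begin{equation*}
V(x)=\sum_{k\leq x}E(x/k)\,\Lambda(k) .
\end{equation*}
First I would split this sum at $k=x/M$. For $k>x/M$ we have $x/k<M$, hence $E(x/k)\leq1$ by hypothesis; for $k\leq x/M$ we only know $E(x/k)\leq2$. Therefore
\begin{equation*}
V(x)\;\leq\;\sum_{x/M<k\leq x}\Lambda(k)\;+\;2\sum_{k\leq x/M}\Lambda(k)\;=\;\bigl(\psi(x)-\psi(x/M)\bigr)+2\psi(x/M)\;=\;\psi(x)+\psi(x/M) .
\end{equation*}

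Next I would rearrange this to isolate $\psi(x)$, obtaining $\psi(x)\geq V(x)-\psi(x/M)$, and then substitute the two estimates already available. By Theorem~\ref{t:cheb}(a) (which applies since the cancellation condition holds), $V(x)=A(\nu)x+O(\ln x)$. For the subtracted term I would invoke the upper bound from Theorem~\ref{t:cheb}(c) — valid because $E(x)\geq1$ for $1\leq x<N$ together with the cancellation condition — applied at the point $x/M$:
\begin{equation*}
\psi(x/M)\;\leq\;\frac{N}{N-1}\,A(\nu)\,\frac{x}{M}+O(\ln^2\!x) .
\end{equation*}
Combining the three displays gives
\begin{equation*}
\psi(x)\;\geq\;A(\nu)x-\frac{N}{M(N-1)}A(\nu)x+O(\ln^2\!x)\;=\;\Bigl(1-\frac{N}{M(N-1)}\Bigr)A(\nu)x+O(\ln^2\!x) ,
\end{equation*}
as claimed.

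The argument is essentially routine once the right decomposition is spotted; there is no single hard step. The one point requiring a little care is recognizing that the bound $E\leq2$ is exactly what is needed so that the ``bad'' part of the sum contributes at most $2\psi(x/M)$, and that this extra $\psi(x/M)$ is harmless precisely because part (c) of Theorem~\ref{t:cheb} already controls $\psi$ at the smaller scale $x/M$ — so the error terms stay at the $O(\ln^2\!x)$ level and do not require a fresh iteration. I would also double-check the edge cases $M=1$ (where the statement degenerates) and the interaction of the $O(\ln^2\!x)$ term under rescaling $x\mapsto x/M$, but these are cosmetic.
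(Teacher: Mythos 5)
Your proof is correct and follows essentially the same route as the paper: the paper packages your split of the sum at $k=x/M$ as the pointwise bound $E(x)\le\chi(x)+\chi(x/M)$ with $\chi$ the indicator of $[1,\infty)$, which gives the identical inequality $V(x)\le\psi(x)+\psi(x/M)$, and then likewise combines Theorem~\ref{t:cheb}(a) for $V(x)$ with the upper bound \eqref{e:sylvester-5} applied at $x/M$. No gaps; your remarks on the rescaling of the error term are fine.
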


\begin{proof}
The additional condition can be written as
\begin{equation}
E(x)\leq\chi(x)+\chi(x/M) ,
\end{equation}
where
\begin{equation}
\chi(x) = 
\begin{cases}
1, &\textrm{for}\quad x\geq1 ,\\
0, &\textrm{for}\quad x<1 .
\end{cases}
\end{equation}
Thus we have
\begin{equation}\label{e:sylv-upper}
V(x)=\sum_{k\leq x}E(x/k)\Lambda(k)\leq\sum_{k\leq x}\Big(1+\chi\Big(\frac{x}{Mk}\Big)\Big)\Lambda(k)=\psi(x)+\psi(x/M) ,
\end{equation}
and by invoking \eqref{e:sylvester-5} to estimate $\psi(x/M)$, we get
\begin{equation}
\psi(x)\geq V(x)-\psi(x/M)\geq\big(1-\frac{N}{M(N-1)}\big)A(\nu)x+O(\ln^2\!x) .
\end{equation}
\end{proof}

\begin{example}
For the schemes $\nu_4$, $\nu_5$, and $\nu_6$, respectively, we have $M=5$, $M=17$, and $M=13$.
The bound for $\psi$ is now
\begin{equation}
A'(\nu)x+O(\ln^2\!x) \le \psi(x)\le B(\nu)x+O(\ln^2\!x) ,
\end{equation}
with $A'(\nu)=\big(1-\frac{N}{M(N-1)}\big)A(\nu)$ and $B(\nu)=\frac{N}{N-1}A(\nu)$,
and the numerical values for the aforementioned schemes are computed as
\begin{equation}
\begin{aligned}
A'(\nu_4)&=\frac{19}{25}A(\nu_4)=0.7686\ldots,&\qquad B(\nu_4)&=\frac65A(\nu_4)=1.2136\ldots,\\
A'(\nu_5)&=\frac{79}{85}A(\nu_5)=0.8992\ldots,&\qquad B(\nu_5)&=\frac65A(\nu_5)=1.1610\ldots,\\
A'(\nu_6)&=\frac{107}{117}A(\nu_6)=0.8951\ldots,&\qquad B(\nu_6)&=\frac{10}9A(\nu_6)=1.0875\ldots.
\end{aligned}
\end{equation}
Among these $B(\nu_6)$ is found to be better than Chebyshev's $B(\nu_*)=1.1055\ldots$,
indicating that our work was not in vain.
However, the improvement is rather mild.
Sylvester’s iterative procedure, described next, is designed precisely to extract more improvement from such schemes.
\end{example}

\begin{remark}
To continue the theme of Remarks~\ref{r:graph-1}-\ref{r:graph-2}, the bound \eqref{e:sylv-upper} has a nice graphical interpretation.
As an example, Figure \ref{f:e4e} illustrates how to derive the inequalities $V(x)\leq\psi(x)+\psi(x/17)$ and $V(x)\geq\psi(x)-\psi(x/6)$
from (the $E$-function of) the scheme $\nu_5$.
\end{remark}

\begin{figure}[ht]
\centering
\includegraphics[width=.9\textwidth]{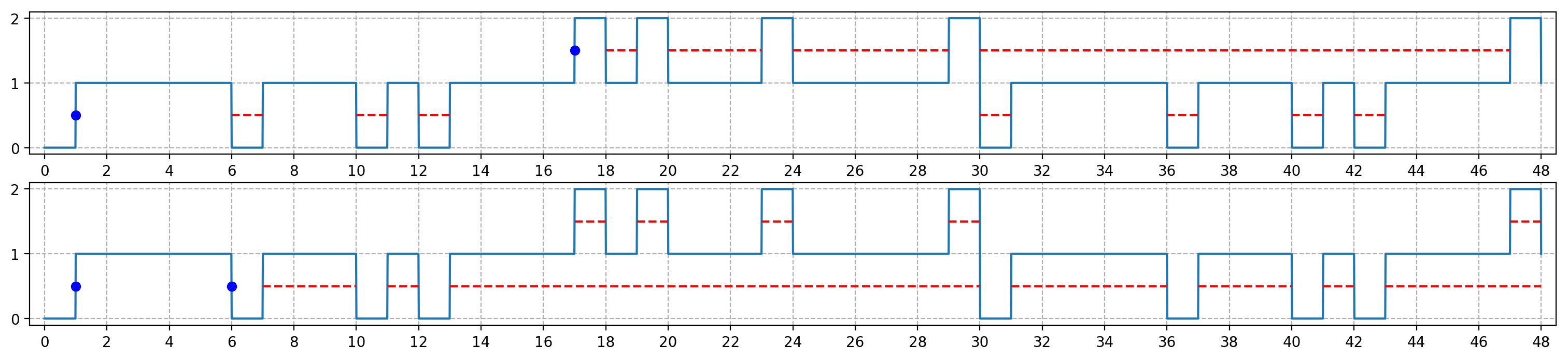}
\caption{Deriving estimates of $\psi$ from $\nu_5$.}
\label{f:e4e}
\end{figure}

\section{Sylvester's iterative refinement}

Sylvester's main contribution to Chebyshev's theory is that he devised a way to iteratively improve the constants of any given scheme.
We explain the procedure for $\nu_4$.
For this scheme, we have
\begin{equation}\label{e:sylvester-6}
a_0x+O(\ln^2\!x)\leq\psi(x)\leq b_0x+O(\ln^2\!x) ,
\end{equation}
where we have introduced the notations $a_0=A'(\nu_4)$ and $b_0=B(\nu_4)$.
To derive these estimates, we used the bounds
\begin{equation}
\begin{split}
V(x)
&=\psi(x)-\psi(x/6)+\underbrace{\psi(x/5)-\psi(x/6)}_{\geq0}+\underbrace{\psi(x/7)+\psi(x/11)-2\psi(x/12)}_{\geq0}+\ldots\\
&\geq\psi(x)-\psi(x/6)
\end{split}
\end{equation}
and
\begin{equation}
\begin{split}
V(x)
&=\psi(x)+\psi(x/5)\underbrace{-2\psi(x/6)+\psi(x/7)+\psi(x/11)}_{\leq0}\underbrace{-2\psi(x/12)+\psi(x/13)+\psi(x/17)}_{\leq0}+\ldots\\
&\leq\psi(x)+\psi(x/5) .
\end{split}
\end{equation}
To improve these bounds, 
we replace the first by 
\begin{equation}\label{e:eg-41b1}
\begin{split}
V(x)
&=\psi(x)-\psi(x/6)+\underbrace{\psi(x/5)-\psi(x/6)}_{\geq0}+\psi(x/7)-\psi(x/12)+\underbrace{\psi(x/11)-\psi(x/12)}_{\geq0}+\ldots\\
&\geq\psi(x)-\psi(x/6)+\psi(x/7)-\psi(x/12)
\end{split}
\end{equation}
and the second by 
\begin{equation}\label{e:eg-41b2}
\begin{split}
V(x)
&=\psi(x)+\psi(x/5)-\psi(x/6)+\psi(x/11)\underbrace{-\psi(x/6)+\psi(x/7)}_{\leq0}\underbrace{-2\psi(x/12)+\ldots}_{\leq0}\\
&\leq\psi(x)+\psi(x/5)-\psi(x/6)+\psi(x/11) .
\end{split}
\end{equation}
The idea is that, for example, in the first estimate, instead of invoking $\psi(x/7)-\psi(x/12)\geq0$ and simply omitting the term $\psi(x/7)-\psi(x/12)$,
it might be beneficial to use the bounds \eqref{e:sylvester-6} to extract some ``positivity'' from $\psi(x/7)-\psi(x/12)$.
The process is depicted in Figure \ref{f:e44}.

\begin{figure}[ht]
\centering
\includegraphics[width=.8\textwidth]{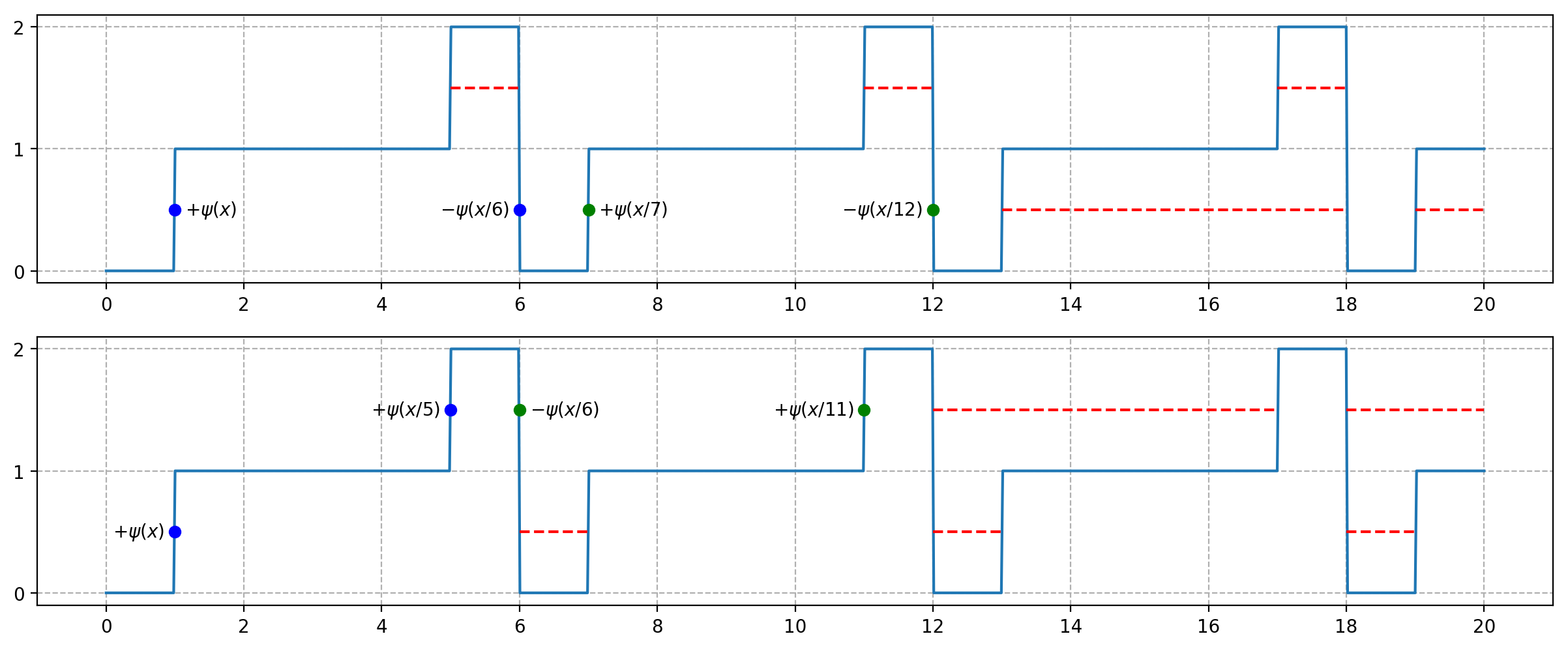}
\caption{Green dots indicate the new terms in \eqref{e:eg-41b1}-\eqref{e:eg-41b2}.}
\label{f:e44}
\end{figure}

On the graph, the blue dots indicate the terms that existed before, and the green dots indicate the newly added terms into the estimate.
In what follows, we will call the terms corresponding to the blue dots the {\em leading terms}.
Thus, by using \eqref{e:sylvester-6} , we get
\begin{equation*}
\begin{split}
\psi(x)-\psi(x/6)&\leq V(x)-\psi(x/7)+\psi(x/12)\leq V(x)-\big(\frac{a_0}7-\frac{b_0}{12}\big)x+O(\ln^2\!x) ,\\
\psi(x)&\geq V(x)-\psi(x/5)+\psi(x/6)-\psi(x/11)\geq V(x)+\big(\frac{a_0}6-\frac{b_0}{11}-\frac{b_0}{5}\big)x+O(\ln^2\!x) ,
\end{split}
\end{equation*}
which leads to
\begin{equation}
\big(A+\frac{a_0}6-\frac{b_0}{11}-\frac{b_0}{5}\big)x+O(\ln^2\!x)\leq\psi(x)\leq \frac65\big(A-\frac{a_0}7+\frac{b_0}{12}\big)x+O(\ln^2\!x) ,
\end{equation}
where $A=A(\nu_4)$.
Hence, with the sequences $\{a_i\}$ and $\{b_i\}$ defined by the recurrence 
\begin{equation}\label{e:sylvester-7}
\begin{cases}
a_{i+1}&=A+\frac{a_i}6-\frac{b_i}{11}-\frac{b_i}{5} ,\\
b_{i+1}&=\frac65\big(A-\frac{a_i}7+\frac{b_i}{12}\big) ,
\end{cases}
\end{equation}
we have the bounds
\begin{equation}
a_ix+O(\ln^2\!x)\leq\psi(x)\leq b_ix+O(\ln^2\!x) .
\end{equation}
Now, assuming that $a_i\to a=\alpha A$ and $b_i\to b=\beta A$ as $i\to\infty$, 
in the limit,
the aforementioned recurrence becomes 
\begin{equation}
\begin{cases}\alpha&=1+\frac{\alpha}6-\frac{\beta}{11}-\frac{\beta}{5}\\\beta&=\frac65\big(1-\frac{\alpha}7+\frac{\beta}{12}\big)\end{cases} ,
\end{equation}
yielding
\begin{equation}
\alpha=\frac{4242}{5391}=0.7868\ldots,\qquad\beta=\frac{6380}{5391}=1.18345\ldots .
\end{equation}
So if our assumption 
\begin{equation}\label{e:eg-41}
a_i\to a=\alpha A=0.7958\ldots,\qquad b_i\to b=\beta A=1.1969\ldots
\end{equation}
holds, then for any $\varepsilon>0$, we have
\begin{equation}
(a-\varepsilon)x+O(\ln^2\!x)\leq\psi(x)\leq (b+\varepsilon)x+O(\ln^2\!x) ,
\end{equation}
meaning that the constants $a$ and $b$ obtained by the recurrence are an improvement on the original constants 
\begin{equation}
A'(\nu_4)=0.7686\ldots,\qquad B(\nu_4)=1.2136\ldots .
\end{equation}
To verify the assumption, we let $a_i=(\alpha+\Delta\alpha_i)A$ and $b_i=(\beta+\Delta\beta_i)A$,
and write the recurrence as
\begin{equation}
\begin{cases}\Delta\alpha_{i+1}&=\frac{\Delta\alpha_i}6-\frac{\Delta\beta_i}{11}-\frac{\Delta\beta_i}{5}=\frac{\Delta\alpha_i}6-\frac{16}{55}\Delta\beta_i\\\Delta\beta_{i+1}&=\frac65\big(-\frac{\Delta\alpha_i}7+\frac{\Delta\beta_i}{12}\big)=-\frac{6}{35}\Delta\alpha_i+\frac1{10}{\Delta\beta_i}\end{cases}
\end{equation}
or
\begin{equation}
\begin{pmatrix}\Delta\alpha_{i+1}\\\Delta\beta_{i+1}\end{pmatrix}=\begin{pmatrix}\frac16&-\frac{16}{55}\\-\frac{6}{35}&\frac1{10}\end{pmatrix}\begin{pmatrix}\Delta\alpha_{i}\\\Delta\beta_{i}\end{pmatrix} .
\end{equation}
The eigenvalues of the involved matrix are
\begin{equation}
\lambda_1=-0.0924\ldots,\qquad\lambda_2=0.3591\ldots ,
\end{equation}
and therefore regardless of the values of $C_1,\ldots,C_4$, the sequences 
\begin{equation}
\Delta\alpha_{i}=C_{1}\lambda_1^i+C_{2}\lambda_2^i,\qquad\Delta\beta_{i}=C_{3}\lambda_1^i+C_{4}\lambda_2^i ,
\end{equation}
both converge to $0$ as $i\to\infty$.
Moreover, the outcome of the procedure, the constants $a$ and $b$ do not depend on the initial values $a_0$ and $b_0$,
and depends only on the coefficients of the recurrence \eqref{e:sylvester-7}.
Indeed, the preceding argument is valid even when $a_0=0$ or $a_0<0$.
Sylvester's procedure gives better and better bounds as it iterates, starting with, say, $a_0=0$ and $b_0=100$.
In Figure \ref{f:iter1}, we depict the iterations corresponding to two initial value pairs $a_0=0.1$, $b_0=3$ and $a_0=0$, $b_0=6$.

\begin{figure}[ht]
\centering
\includegraphics[width=.7\textwidth]{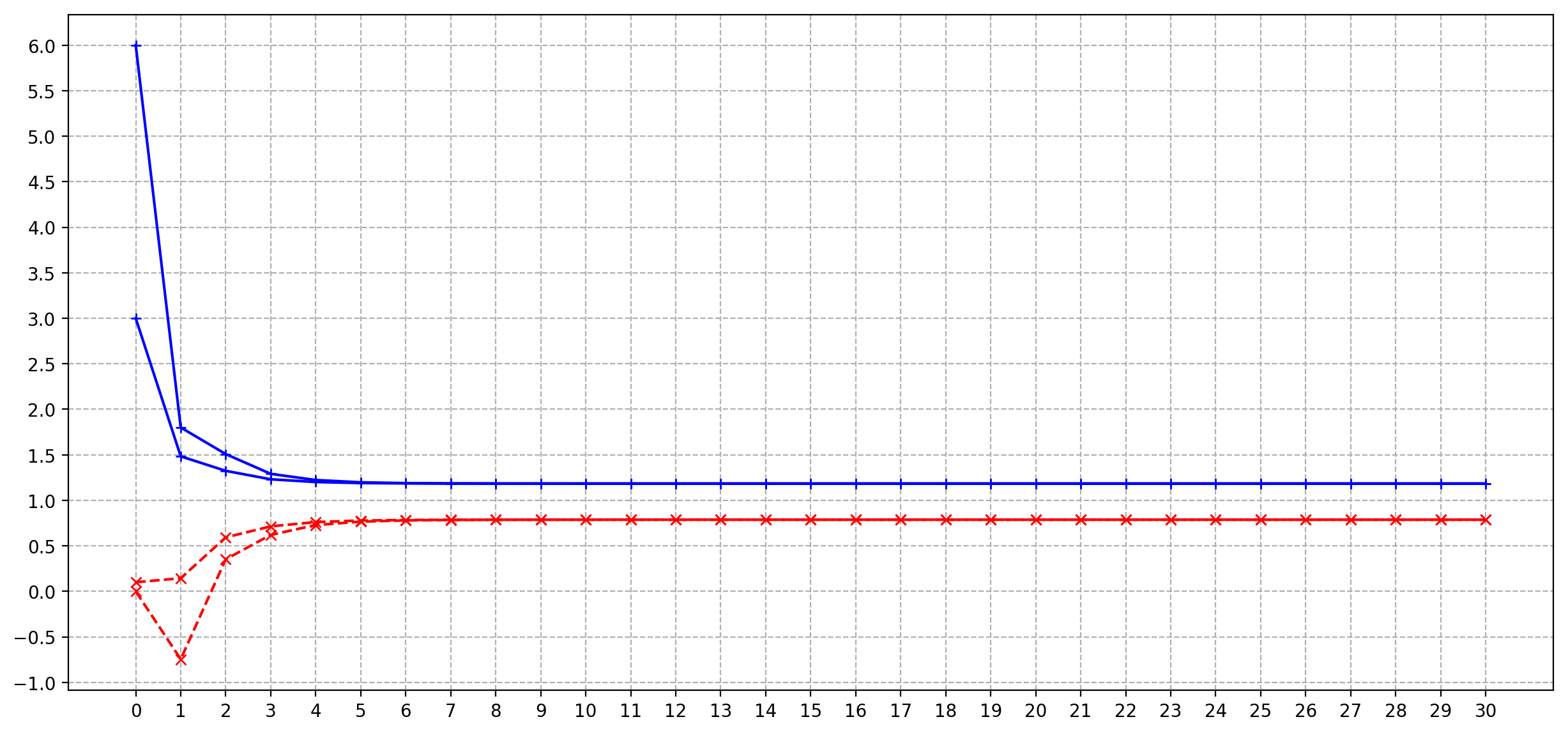}
\caption{Sequences $a_i$ (red) and $b_i$ (blue).}
\label{f:iter1}
\end{figure}

\section{Computational optimization}

Before applying Sylvester's procedure to other schemes, let us consider the question 
of how one could choose the terms of the expansion of $V(x)$ to keep in the inequalities that lead to the recurrence relation.
In the $i$-th step of the iteration, we would have the bounds
\begin{equation}
a_ix+O(\ln^2\!x)\leq\psi(x)\leq b_ix+O(\ln^2\!x) .
\end{equation}
In the next step, suppose that we include the term $\psi(x/m)-\psi(x/n)$.
Since
\begin{equation}
V(x)=\psi(x)+\ldots+\psi(x/m)-\psi(x/n)+\ldots
\displaystyle\geq\psi(x)+\ldots+\big(\frac{a_i}{m}-\frac{b_i}{n}\big)x+\ldots ,
\end{equation}
the upper bound would improve if
$\displaystyle\frac{a_i}{m}-\frac{b_i}{n}>0$ or
\begin{equation}
\frac{n}{m}>\frac{b_i}{a_i} ,
\end{equation}
and the upper bound would get worse if
\begin{equation}\label{e:eg-4rat}
\frac{n}{m}<\frac{b_i}{a_i} .
\end{equation}
In other words, we shouldn't include pairs whose indices are too close together.
Obviously, the same applies to lower bounds as well.

Thus, in order to automate the process, 
we fix a constant $\rho>1$, 
and apart from the leading terms,
let us include 
\begin{itemize}
\item
the pairs $\psi(x/m)-\psi(x/n)$ satisfying $\frac{n}{m}>\rho$ into the upper bound of $V(x)$,
\item
and the pairs $-\psi(x/m)+\psi(x/n)$ satisfying $\frac{n}{m}>\rho$ into the lower bound of $V(x)$.
\end{itemize}
As an example, Figure \ref{f:e444} shows the new terms in green for the scheme $\nu_4$, when $\rho=1.3$.

\begin{figure}[ht]
\centering
\includegraphics[width=.8\textwidth]{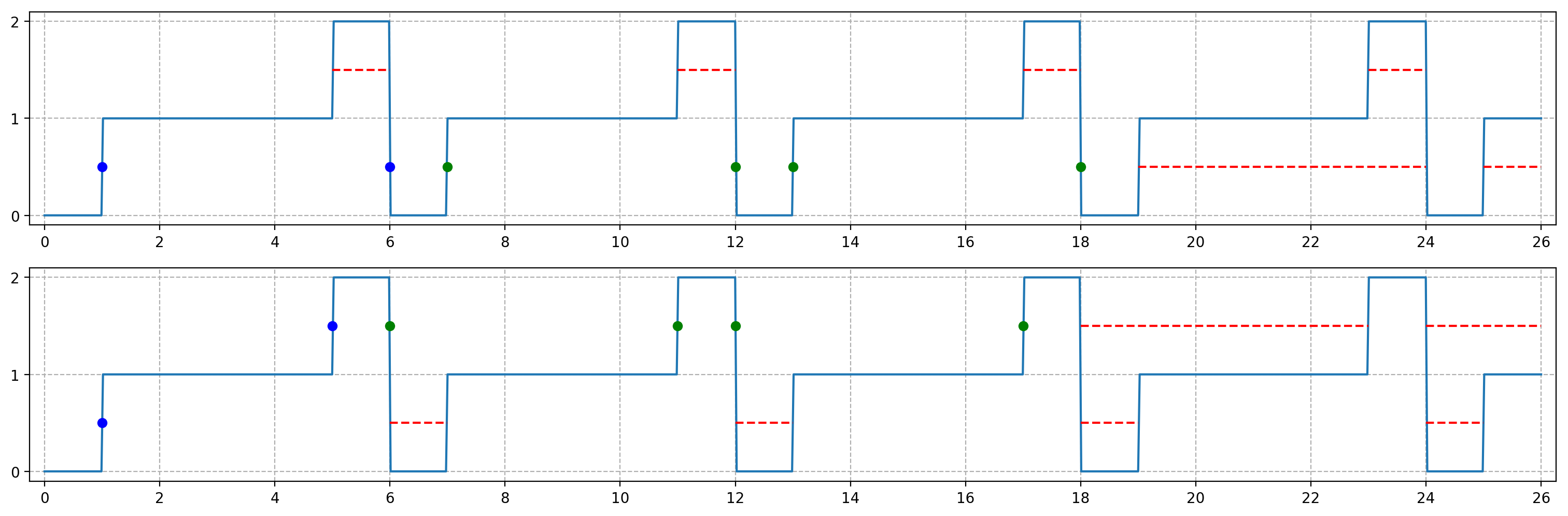}
\caption{With $\rho=1.3$, we get one additional pair in each bound, cf. Figure \ref{f:e44}.}
\label{f:e444}
\end{figure}

Compared to the previous example \eqref{e:eg-41b1}-\eqref{e:eg-41b2},
each bound has now one additional pair.
Namely, we get the bounds
\begin{equation}
\begin{split}
V(x)&\geq\psi(x)-\psi(x/6)+\psi(x/7)-\psi(x/12)+\psi(x/13)-\psi(x/18)\\
V(x)&\leq\psi(x)+\psi(x/5)-\psi(x/6)+\psi(x/11)-\psi(x/12)+\psi(x/17)
\end{split}
\end{equation}
leading to the iteration
\begin{equation}
\begin{cases}
a_{i+1}&=A+\big(\frac16+\frac1{12}\big)a_i-\big(\frac15+\frac1{11}+\frac1{17}\big)b_i\\
b_{i+1}&=\frac65A-\big(\frac17+\frac1{13}\big)\cdot\frac65a_i+\big(\frac1{12}+\frac1{18}\big)\cdot\frac65b_i
\end{cases}
\end{equation}
whose limit behaviour is
\begin{equation}
a_i\to a=0.7852\ldots,\qquad b_i\to b=1.5381\ldots .
\end{equation}
This is weaker than the values
$a=0.7958\ldots$ and $b=1.1969\ldots$ that we found earlier in \eqref{e:eg-41},
meaning that we have added too many new terms into the bounds.
The additional terms we have added were $\psi(x/13)-\psi(x/18)$ and $-\psi(x/12)+\psi(x/17)$.
The index ratios of these pairs, respectively, are 
\begin{equation}
\frac{18}{13}=1.3846\ldots<\frac{b}{a}=1.5381\ldots,\qquad\frac{17}{12}=1.4166\ldots<\frac{b}{a} ,
\end{equation}
which, in light of \eqref{e:eg-4rat}, indicate that 
towards the end of the iteration, those pairs were in fact harming the ratio $\frac{b}{a}$, rather than helping.
To conclude 
\begin{itemize}
\item
If the index ratio $\frac{n}m$ of a given pair $\pm\psi(x/m)\mp\psi(x/n)$ is less than the final ratio $\frac{b}{a}$, 
then by removing the pair from the procedure, the final ratio will be improved.
\item
Hence we should try to choose $\rho$ so that $\rho\approx\frac{b}{a}$.
\end{itemize}
In Figure \ref{f:e4ab},
we illustrate how the outcome of Sylvester's procedure for $\nu_4$ depends on the parameter $\rho$.
The first graph shows the constants $a$ and $b$,
the second graph the eigenvalues of the iteration matrix,
and the third graph the number of terms included into the upper and lower bounds of $V(x)$.
The plots suggest that near $\rho=1.5$ the values of $a$ and $b$ are essentially optimal within this one parameter family.
Near this value, the upper and lower bounds of $V(x)$ each has 4 terms.
At $\rho=1.5$, we have $a=0.7958\ldots$ and $b=1.1969\ldots$, which we found earlier in \eqref{e:eg-41}.

\begin{figure}[ht]
\centering
\includegraphics[width=.9\textwidth]{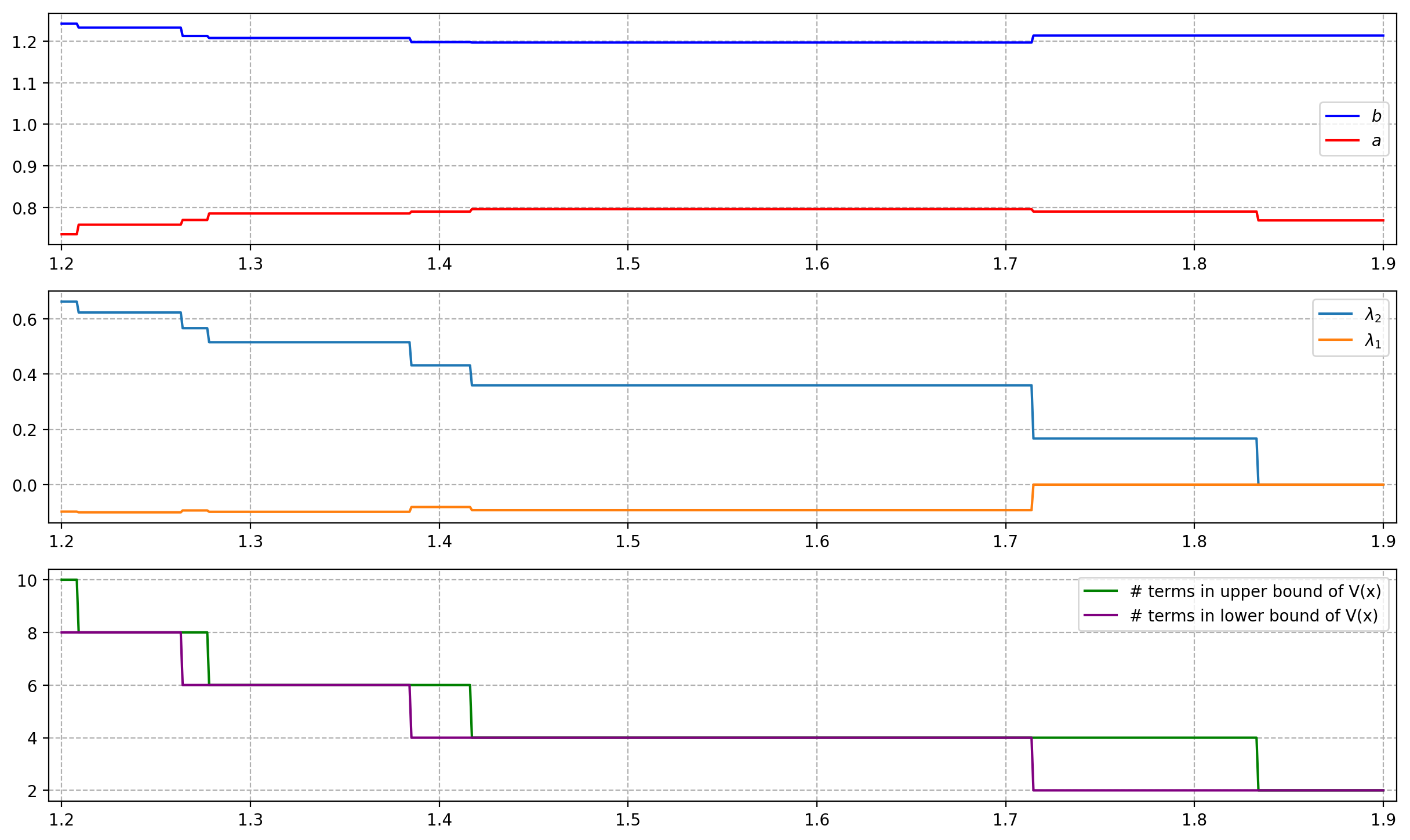}
\caption{$\nu_4$.}
\label{f:e4ab}
\end{figure}

Now, let us apply Sylvester's procedure to Chebyshev's scheme.
Taking $\rho=1.2$, we get
\begin{equation}
\begin{split}
V(x)&=\psi(x)-\psi(x/6)+\psi(x/7)-\psi(x/10)+\underbrace{\psi(x/11)-\psi(x/12)+\ldots}_{\geq0}\\&\quad\geq\psi(x)-\psi(x/6)+\psi(x/7)-\psi(x/10) ,\\
V(x)&=\psi(x)\underbrace{-\psi(x/6)+\psi(x/7)-\ldots+\psi(x/23}_{\leq0})-\psi(x/24)+\psi(x/29)\underbrace{-\psi(x/30)+\ldots}_{\leq0}\\&\quad\leq\psi(x)-\psi(x/24)+\psi(x/29) ,
\end{split}
\end{equation}
cf. Figure \ref{f:echebcheb}.

\begin{figure}[h!t]
\centering
\includegraphics[width=.9\textwidth]{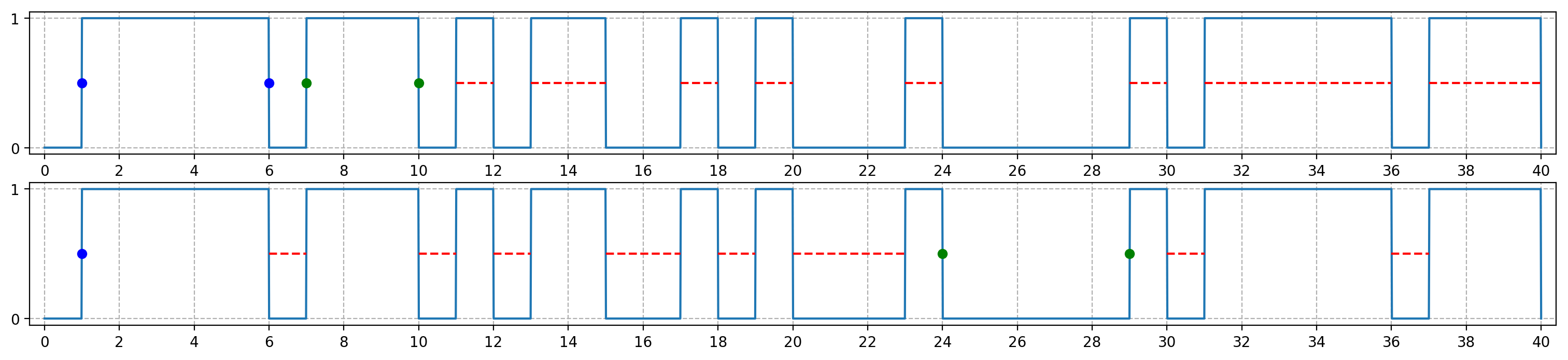}
\caption{Chebyshev's scheme, with $\rho=1.2$.}
\label{f:echebcheb}
\end{figure}

The corresponding iteration is
\begin{equation}
\begin{cases}a_{i+1}&=A+\frac{a_i}{24}-\frac{b_i}{29}\\b_{i+1}&=\frac65\big(A-\frac{a_i}7+\frac{b_i}{10}\big)\end{cases}
\end{equation}
where $A=A(\nu_*)=0.9212\ldots$.
Proceeding as before, we set $a_i=\alpha_i A$ and $b_i=\beta_i A$, and write the iteration as
\begin{equation}
\begin{pmatrix}\alpha_{i+1}\\\beta_{i+1}\end{pmatrix}=\begin{pmatrix}1\\\frac65\end{pmatrix}+\begin{pmatrix}\frac1{24}&-\frac1{29}\\-\frac{6}{35}&\frac6{50}\end{pmatrix}\begin{pmatrix}\alpha_{i}\\\beta_{i}\end{pmatrix} .
\end{equation}
Since the eigenvalues of the matrix are
\begin{equation}
\lambda_1=-0.0054\ldots,\qquad\lambda_2=0.1671\ldots ,
\end{equation}
we conclude that no matter the values of the constants $C_1,\ldots,C_4$,
the sequences
\begin{equation}
\alpha_{i}=\alpha+C_{1}\lambda_1^i+C_{2}\lambda_2^i,\qquad\beta_{i}=\beta+C_{3}\lambda_1^i+C_{4}\lambda_2^i
\end{equation}
converge to $\alpha$ and $\beta$ respectively, as $i\to\infty$.
Those limits can easily be found by putting $\alpha_i=\alpha$ and $\beta_i=\beta$ into the iteration, as
\begin{equation}
\alpha=\frac{51072}{50999},\qquad\beta=\frac{59595}{50999} ,
\end{equation}
which implies that the inequalities
\begin{equation}
a_ix+O(\ln^2\!x)\leq\psi(x)\leq b_ix+O(\ln^2\!x)
\end{equation}
are true, with
\begin{equation}\label{e:eg-chebab}
a_i\to\alpha A=0.9226\ldots,\qquad b_i\to\beta A=1.0765\ldots .
\end{equation}
This is a significant improvement over Chebyshev's original 
\begin{equation}
A(\nu_*)=A=0.9212\ldots,\qquad B(\nu_*)=\frac65A=1.1055\ldots
\end{equation}
In fact, the values in \eqref{e:eg-chebab} are essentially optimal for Chebyshev's scheme,
cf. Figure \ref{f:ecab}.

\begin{figure}[ht]
\centering
\includegraphics[width=.9\textwidth]{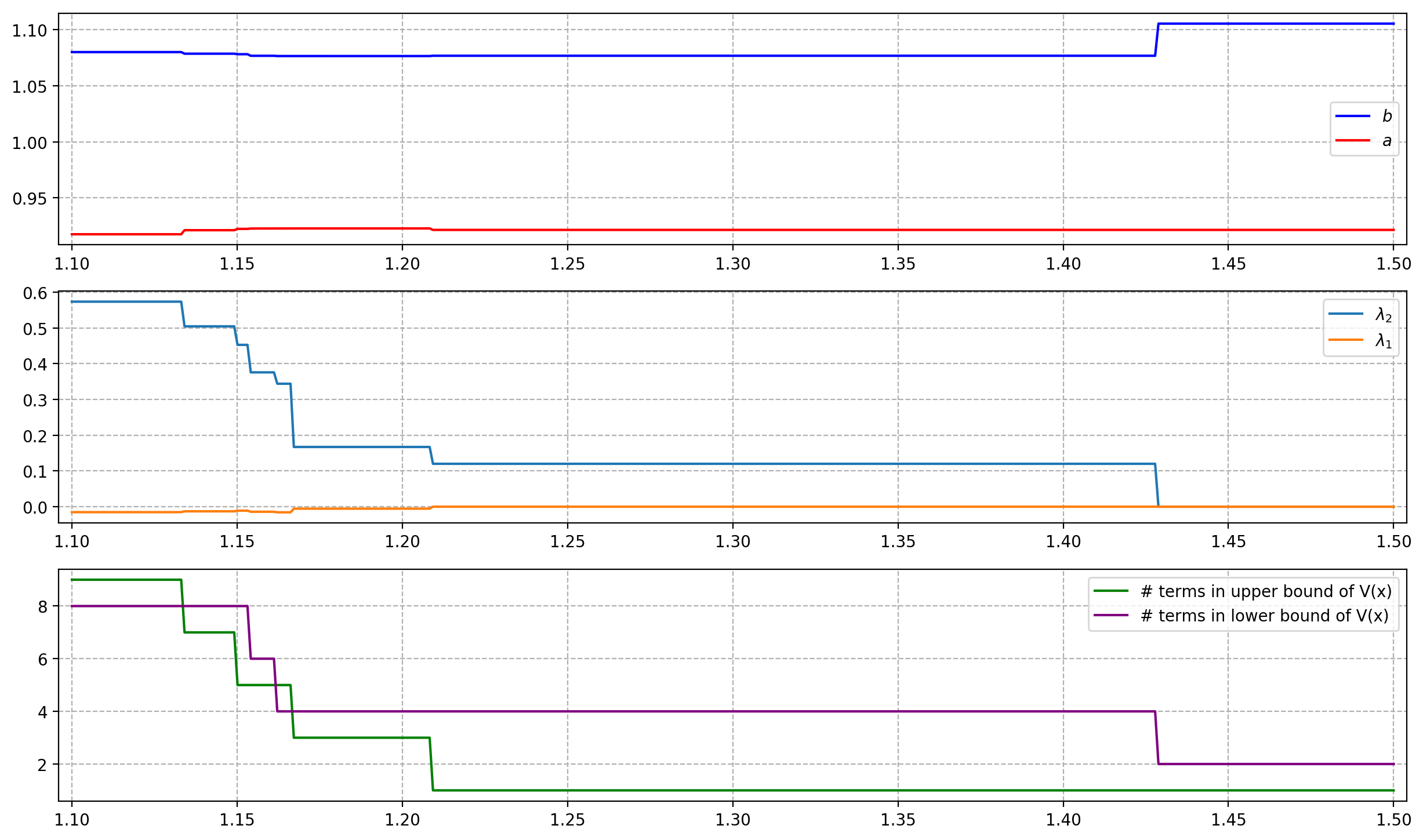}
\caption{Chebyshev's scheme, with varying $\rho$.}
\label{f:ecab}
\end{figure}

The values in \eqref{e:eg-chebab} are obtained by Sylvester in his 1881 work.
In the same work, the schemes $\nu_1$ and $\nu_4$ appeared for the first time. 
Sylvester published another paper on the subject in 1892, where he presents, apart from $\nu_2$, $\nu_3$, $\nu_5$, and $\nu_6$,
the following gigantic schemes
\begin{equation}
\begin{split}
\nu_7&=[1,6,10,210,231,1155;2,3,5,7,11,105] , \\
\nu_8&=[1, 6, 10, 14, 105;2, 3, 5, 7, 11, 13, 385, 1001] .
\end{split}
\end{equation}
%Sylvester gives the credit of discovering $\nu_3$ to his assistant James Hammond.

By applying Sylvester's procedure to $\nu_5$ with $\rho=1.2$, we get the bounds
\begin{equation}
\begin{split}
V(x)
&\leq\psi(x)+\psi(x/17)-\psi(x/24)+\psi(x/29)\\
&\quad-\psi(x/30)+\psi(x/47)-\psi(x/60)+\psi(x/77) ,\\
V(x)
&\geq\psi(x)-\psi(x/6)+\psi(x/7)-\psi(x/10)+\psi(x/13)-\psi(x/30)\\
&\quad+\psi(x/43)-\psi(x/60)+\psi(x/73)-\psi(x/90) ,
\end{split}
\end{equation}
leading to the iteration
\begin{equation}
\begin{cases}
a_{i+1}&=A+\big(\frac1{24}+\frac1{30}+\frac1{60}\big)a_i-\big(\frac1{17}+\frac1{29}+\frac1{47}+\frac1{77}\big)b_i ,\\
b_{i+1}&=\frac65A-\big(\frac17+\frac1{13}+\frac1{43}+\frac1{73}\big)\cdot\frac65a_i+\big(\frac1{10}+\frac1{30}+\frac1{60}+\frac1{90}\big)\cdot\frac65b_i ,
\end{cases}
\end{equation}
cf. Figure \ref{f:e5e5}.

\begin{figure}[ht]
\centering
\includegraphics[width=.9\textwidth]{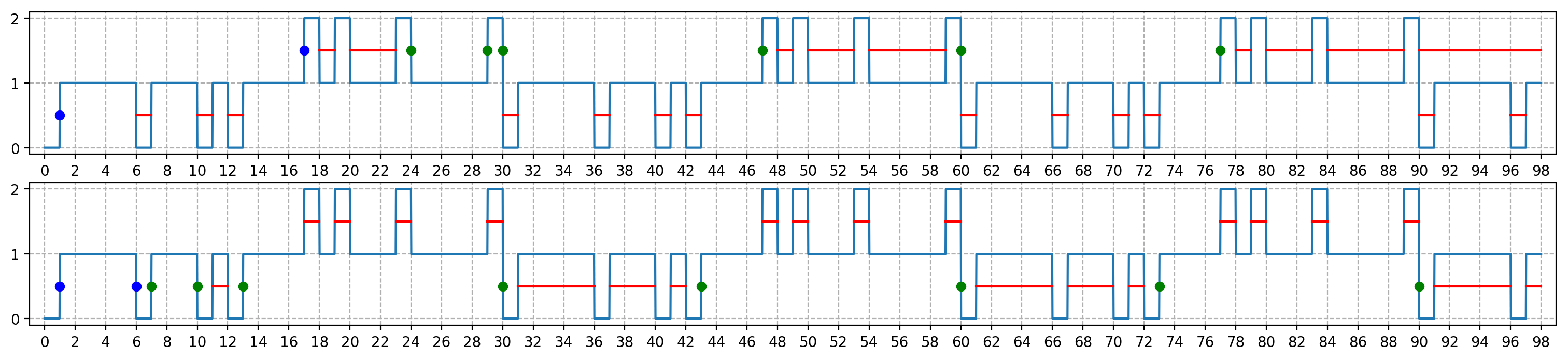}
\caption{Scheme $\nu_5$ with $\rho=1.2$.}
\label{f:e5e5}
\end{figure}

The limit of the iteration is found to be
\begin{equation}
a_i\to a=0.9119\ldots,\qquad b_i\to b=1.0909\ldots ,
\end{equation}
whose ratio is $\frac{b}{a}=1.1963\ldots$.
These values are virtually optimal for $\nu_5$, cf. Figure \ref{f:e5ab}.

\begin{figure}[ht]
\centering
\includegraphics[width=.9\textwidth]{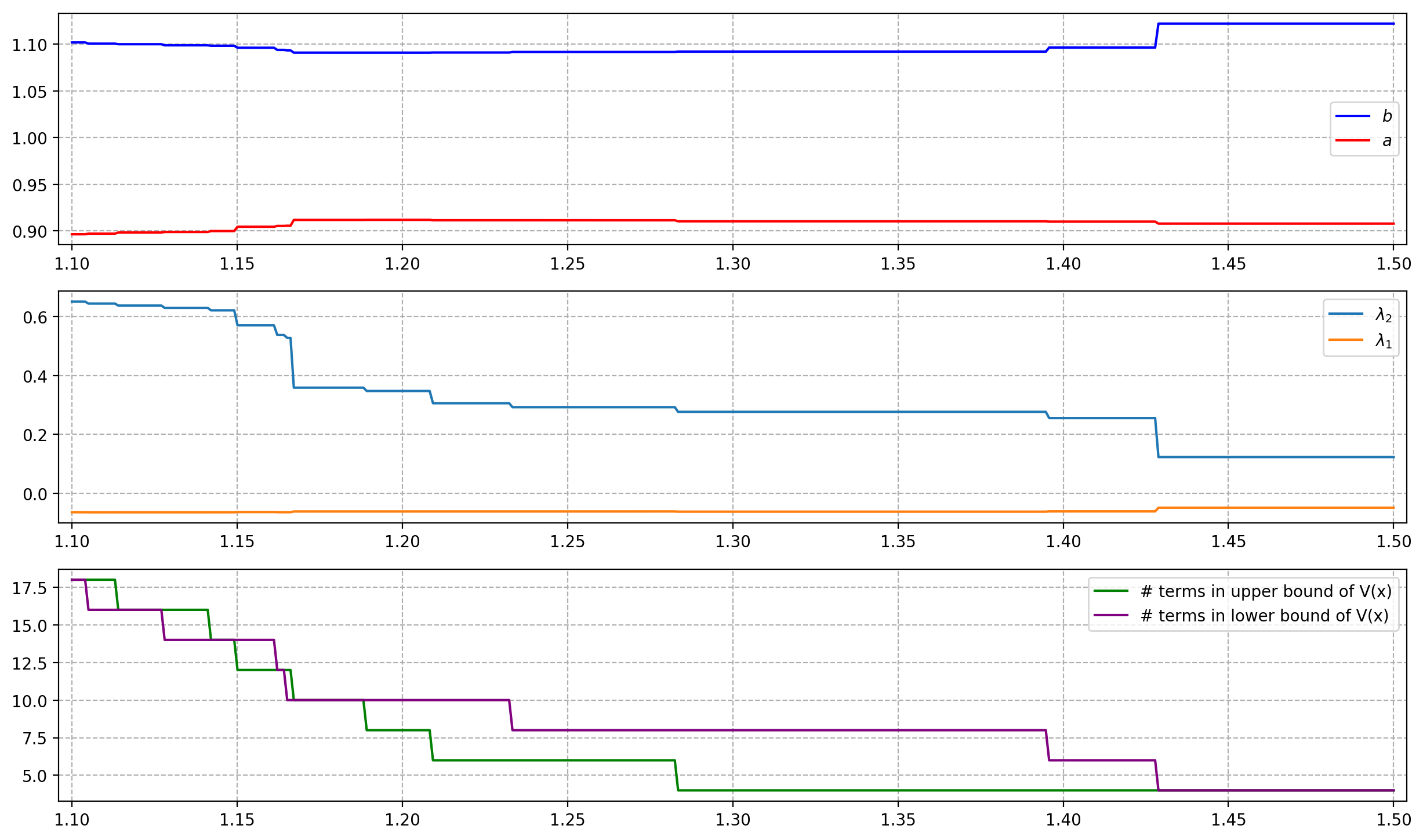}
\caption{Scheme $\nu_5$ with varying $\rho$.}
\label{f:e5ab}
\end{figure}

Next, for $\nu_6$, we take $\rho=1.1$, and get the bounds
\begin{equation}
\begin{split}
V(x)&\leq\psi(x)+\psi(x/13)-\psi(x/10)+\psi(x/11)-\psi(x/15)+\psi(x/17)\\
&\quad-\psi(x/14)+\psi(x/19)-\psi(x/20)+\psi(x/73)-\psi(x/230)+\psi(x/283)\\
&\quad-\psi(x/440)+\psi(x/493)-\psi(x/110)+\psi(x/139) , \\
V(x)&\geq\psi(x)-\psi(x/10)+\psi(x/11)-\psi(x/15)+\psi(x/17)-\psi(x/21)\\
&\quad+\psi(x/23)-\psi(x/28)+\psi(x/31)-\psi(x/35)+\psi(x/71)-\psi(x/100)\\
&\quad+\psi(x/281)-\psi(x/310)+\psi(x/137)-\psi(x/190)+\psi(x/347)-\psi(x/400) .
\end{split}
\end{equation}
Sylvester himself took $\rho=1.1$, but omitted (for some reason) the terms $-\psi(x/440)+\psi(x/493)$ and $\psi(x/281)-\psi(x/310)$ respectively,
from the afore-displayed bounds,
to get
\begin{equation}
a=0.941854\ldots,\qquad b=1.056726\ldots .
\end{equation}
If we include the omitted terms, we get
\begin{equation}
a=0.941806\ldots,\qquad b=1.056825\ldots ,
\end{equation}
suggesting that perhaps Sylvester intentionally omitted those terms.
In any case, none of the aforementioned values are optimal for $\nu_6$,
as the choice $\rho=1.105$ gives
\begin{equation}
a=0.944462\ldots,\qquad b=1.055800\ldots .
\end{equation}
Figure \ref{f:e6ab} suggests that these values are the best possible.

\begin{figure}[ht]
\centering
\includegraphics[width=.9\textwidth]{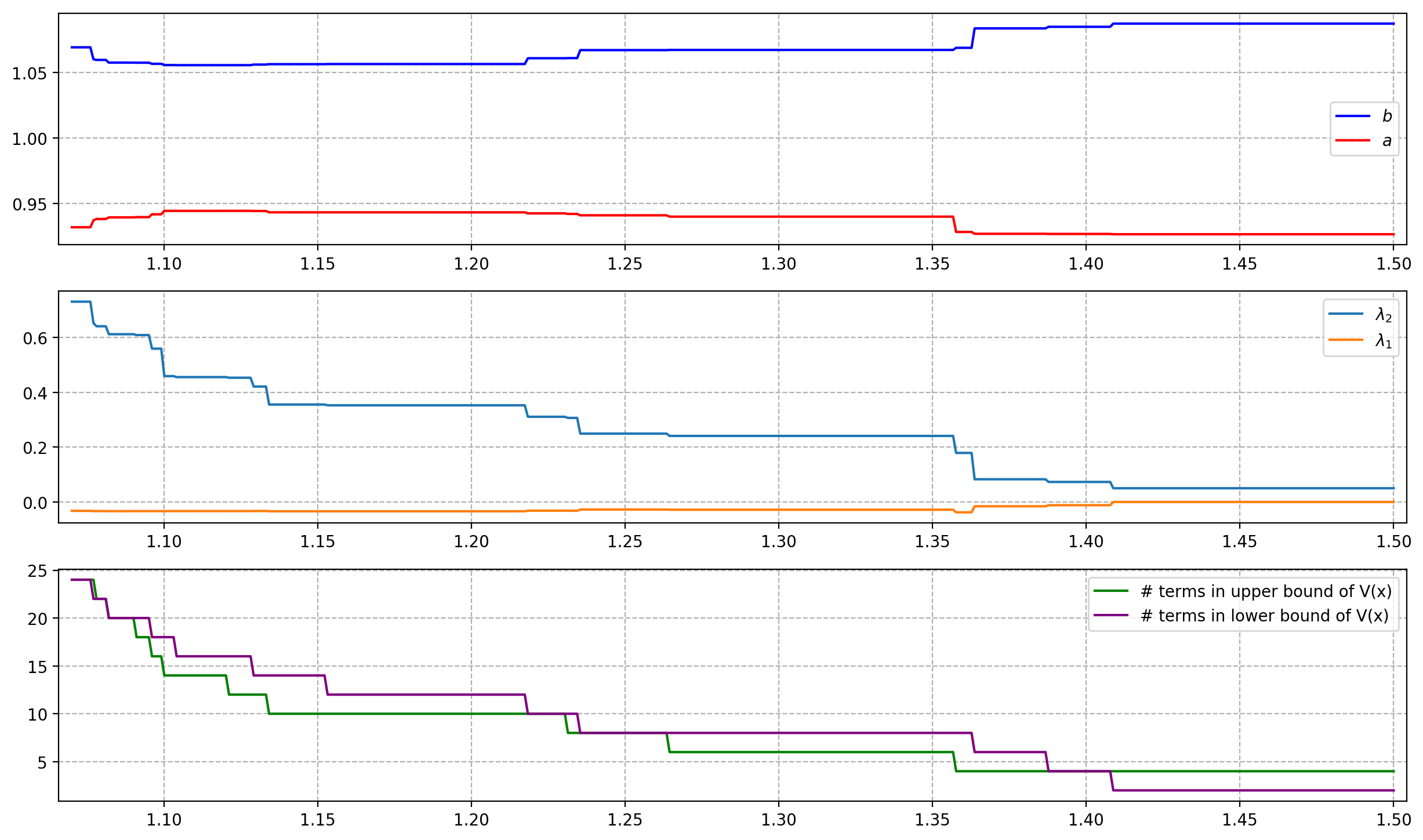}
\caption{Scheme $\nu_6$ with varying $\rho$.}
\label{f:e6ab}
\end{figure}

\section{Sylvester's gigantic schemes}

Finally, let us consider the two gigantic schemes. 
For $\nu_7$, the period of $E(x)$ is 2310, and we have $-2\leq E(x)\leq2$ for all $x$. 
Moreover, we have $E(x)\geq1$ for $x<15$,
and the first occurrence of $E(x)=2$ is at $x=13$.
The first occurrence of $E(x)=-1$ is at $x=105$,
and first occurrence of $E(x)=-2$ is at $x=616$, cf. Figure \ref{f:e7}.

\begin{figure}[ht]
\centering
\includegraphics[width=.9\textwidth]{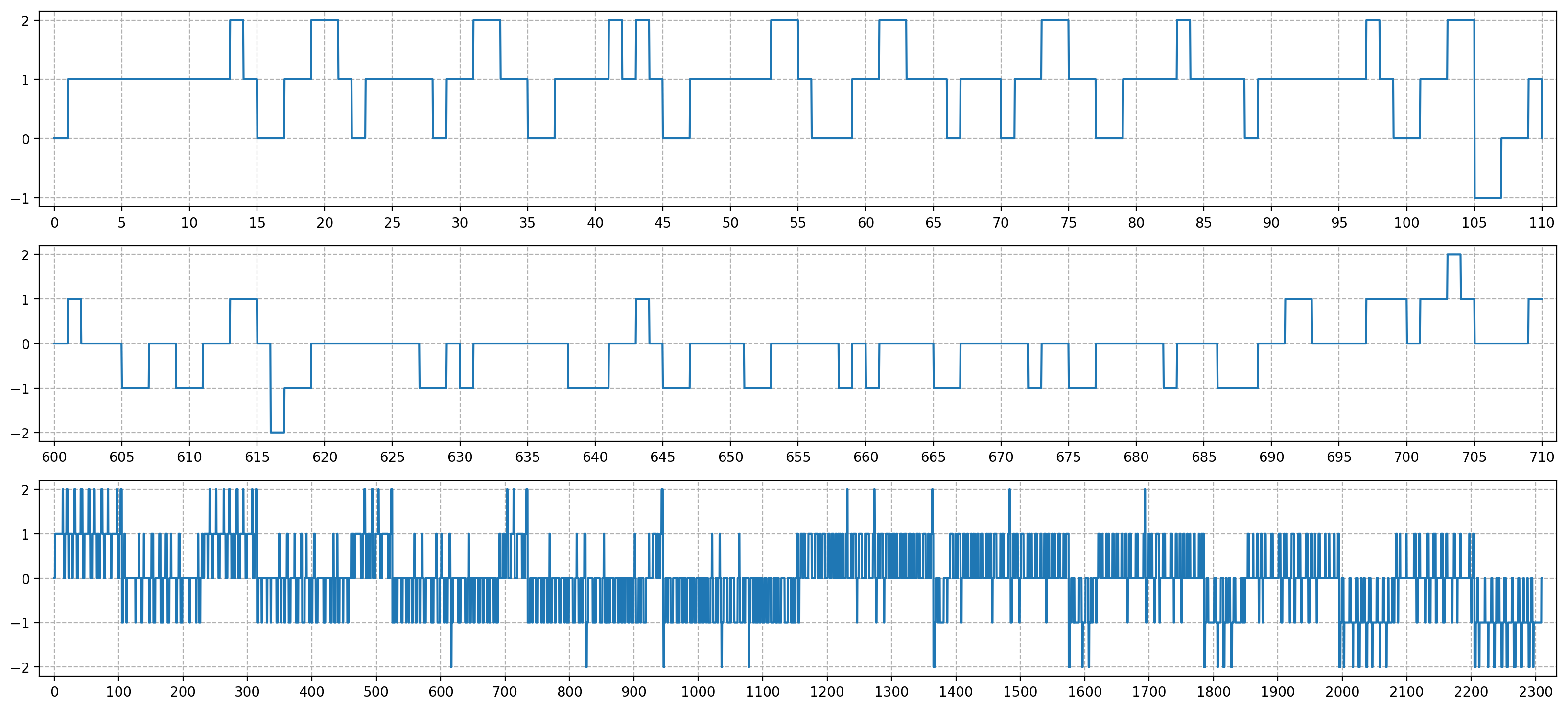}
\caption{$E(x)$ for $\nu_7$.}
\label{f:e7}
\end{figure}

Since especially the lower bound of $V(x)$ is quite difficult to manage,
Sylvester took the upper bound generated by $\nu_7$ with $\rho=1.1$,
and combined it with the lower bound coming from  $\nu_6$,
to obtain
\begin{equation}
a=0.946197\ldots,\qquad b=1.055185\ldots .
\end{equation}
After this, he applied the lower bound coming from $\nu_7$, 
but only including the terms up to $\psi(x/616)$, which improved $b$ to
\begin{equation}
b=1.054239\ldots .
\end{equation}
Since we have computer at our disposal,
we easily find that $\rho=1.113$ is essentially the optimal parameter for $\nu_7$, for which
\begin{equation}
a=0.946585\ldots,\qquad b=1.054309\ldots ,
\end{equation}
cf. Figure \ref{f:e7ab}.
Note that by hybridizing $\nu_7$ with $\nu_6$, Sylvester made the value if $b$ better than that yielded by $\nu_7$ alone. 

\begin{figure}[ht]
\centering
\includegraphics[width=.9\textwidth]{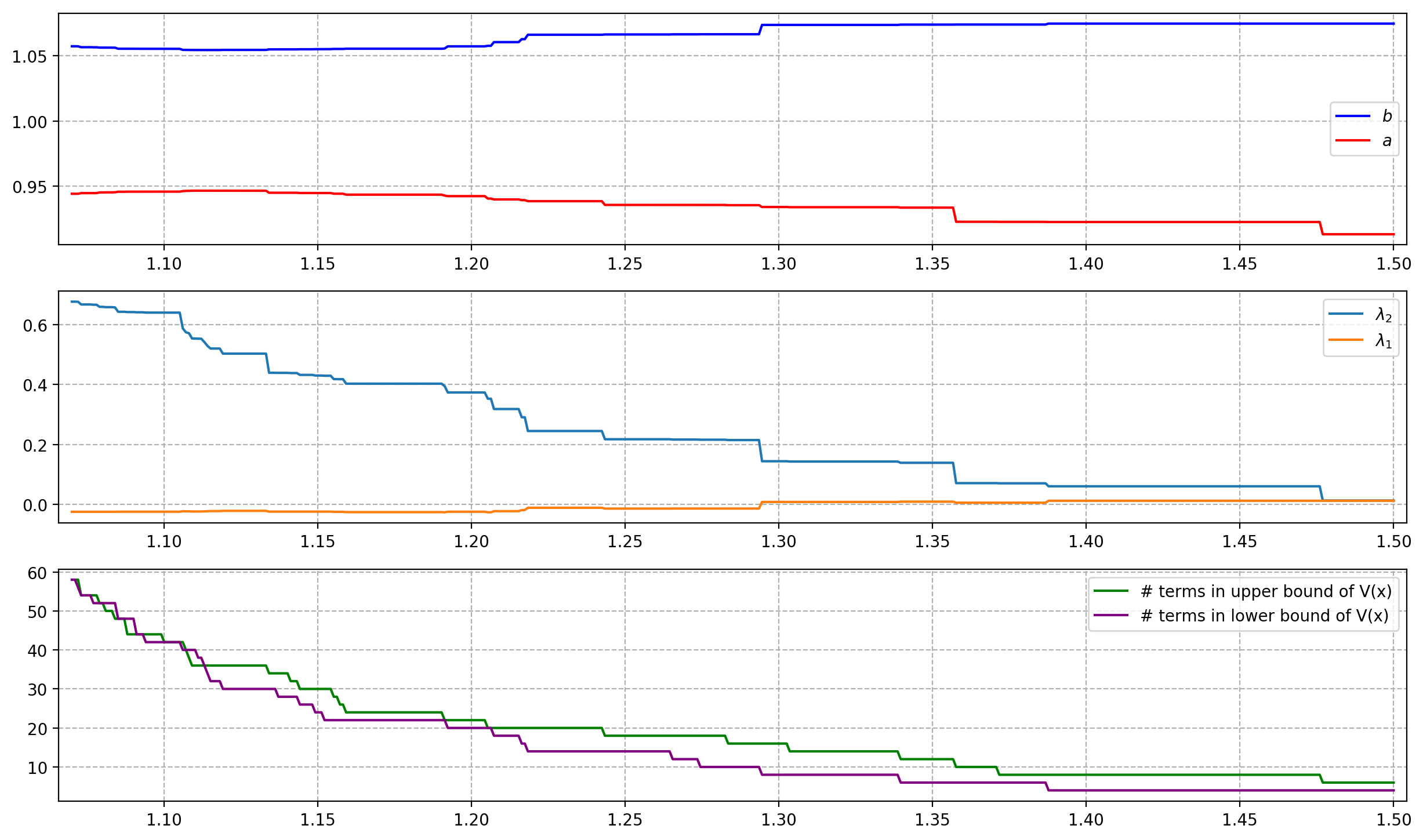}
\caption{Scheme $\nu_7$ with varying $\rho$.}
\label{f:e7ab}
\end{figure}

It is now time to look at our absolute champion $\nu_8$.
For this scheme, the period of $E(x)$ is 30030, and we have $-1\leq E(x)\leq4$ for all $x$. 
Furthermore, we have $E(x)\geq1$ for $x<15$.
The first occurrence of $E(x)=2$ is at $x=19$,
the first occurrence of $E(x)=-1$ is at $x=66$,
the first occurrence of $E(x)=3$ is at $x=229$,
and the first occurrence of $E(x)=4$ is at $x=1891$.

\begin{figure}[ht]
\centering
\includegraphics[width=.9\textwidth]{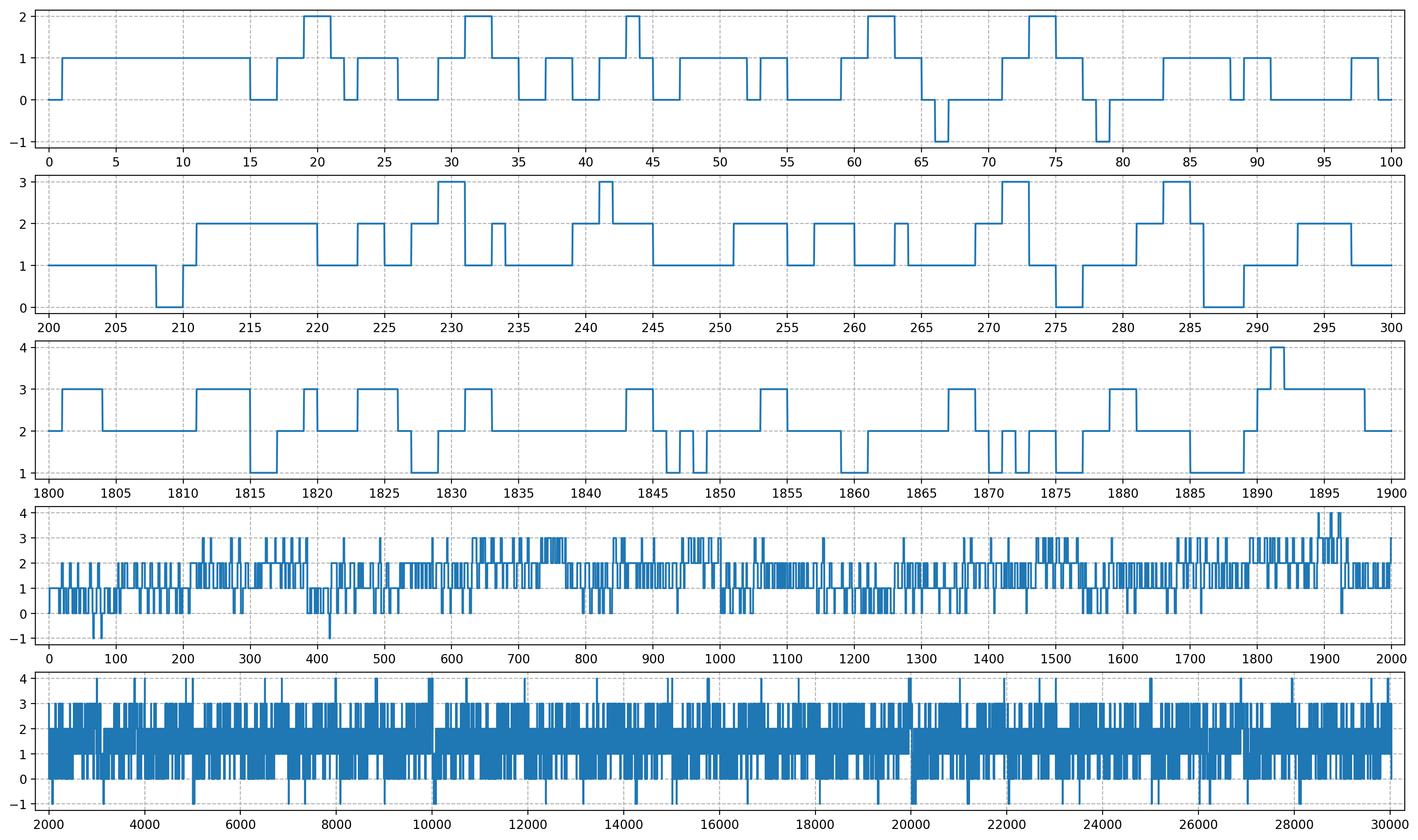}
\caption{Scheme $\nu_8$.}
\label{f:e8}
\end{figure}

Sylvester used $\nu_8$ to obtain his personal record
\begin{equation}
a=0.95695\ldots,\qquad b=1.04423\ldots .
\end{equation}
Nevertheless, we find that the value $\rho=1.09$ is closer to optimality,
yielding
\begin{equation}
a=0.957600\ldots,\qquad b=1.043521\ldots ,
\end{equation}
cf. Figure \ref{f:e8ab}.

\begin{figure}[ht]
\centering
\includegraphics[width=.9\textwidth]{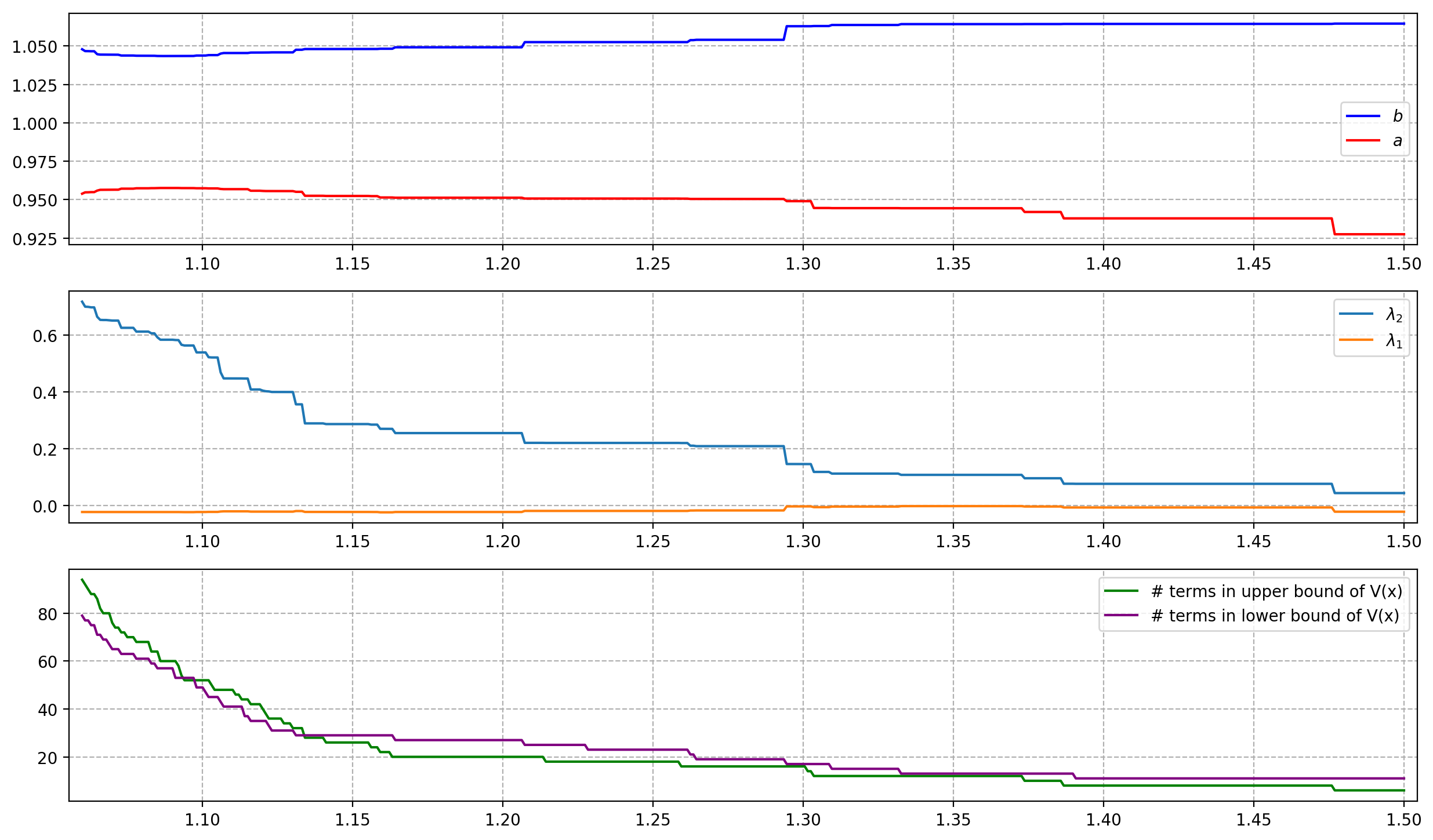}
\caption{Scheme $\nu_8$ with varying $\rho$.}
\label{f:e8ab}
\end{figure}

\section{Conclusion}
The Chebyshev-Sylvester method is a powerful example of elementary reasoning in number theory. 
It begins with a fundamental identity relating the summatory logarithm function $T(x)$ to the Chebyshev function $\psi(x)$. 
By replacing the M\"obius function with a finitely-supported proxy $\nu$, one can derive initial, explicit bounds on $\psi(x)$. Sylvester's main contribution was an iterative procedure that uses these initial bounds to bootstrap a sequence of progressively better bounds, converging to the optimal values achievable for a given scheme $\nu$.

While superseded by complex-analytic methods for proving the Prime Number Theorem, the method remains of great pedagogical value. 
It provides a concrete case study in the techniques of elementary analytic number theory, including convolution identities, summatory functions, and bootstrapping arguments. 
The computational optimization presented here makes this classical theory accessible and demonstrates the full power of Sylvester's ingenious machinery.

It may be helpful to place the present work in the context of some
standard references.
Davenport's multiplicative number theory text \cite{DavenportMNT}
presents the Chebyshev functions and the von Mangoldt function
primarily as entry points to analytic methods via the
Dirichlet series 
\[
-\frac{\zeta'(s)}{\zeta(s)} = \sum_{n\ge1} \frac{\Lambda(n)}{n^s} .
\]
Diamond's survey \cite{DiamondElem} collects a wide range of
elementary arguments in prime number theory, including Chebyshev's
original inequalities, Erd\H{o}s--Selberg's elementary proof of the
prime number theorem, and related refinements.
In contrast, we have concentrated here on a single elementary
mechanism --- the use of finitely supported schemes $\nu$,
the associated function $E(x)$, and the Sylvester iteration --- and
have made this framework fully explicit and computational, without
calling on complex analysis or the more global themes.

\section*{Acknowledgements}

The author would like to thank Jonah Saks for reading an early draft of these notes, offering valuable feedback, and independently verifying the computer programs during his undergraduate research internship.
This work was supported by NSERC Discovery Grants Program.
%fellowship grant P2021-4201 of the National University of Mongolia.

\end{document}